\newtheorem{Theorem}{Theorem}[section]
\newtheorem{Lemma}[Theorem]{Lemma}
\newtheorem{Corollary}[Theorem]{Corollary}
\newtheorem{Remark}[Theorem]{Remark}
\newcommand{\beq}{\begin{equation}}
\newcommand{\eeq}{\end{equation}}
\newcommand{\ben}{\begin{eqnarray}}
\newcommand{\een}{\end{eqnarray}}
\newcommand{\beno}{\begin{eqnarray*}}
\newcommand{\eeno}{\end{eqnarray*}}
\begin{document}

\title{Classification of  solutions of  the 2D steady Navier-Stokes equations with separated variables in cone-like domains}

\author{Wendong Wang\thanks{\small School of Mathematical Sciences, Dalian University of Technology, Dalian 116024, China; E-mail: wendong@dlut.edu.cn} \,
and\, Jie Wu\thanks{\small Center for Applied Mathematics, Tianjin University, Tianjin 300072, China; E-mail: jackwu@amss.ac.cn} }

\date{\today}

\maketitle

\begin{abstract}
We investigate the problem of classification of solutions for the steady Navier-Stokes equations in any cone-like domains.
In the form of separated variables,
$$u(x,y)=\left(
                              \begin{array}{c}
                                \varphi_1(r)v_1(\theta) \\
                                \varphi_2(r)v_2(\theta) \\
                              \end{array}
                            \right)
,$$
where $x=r\cos\theta$ and  $y=r\sin\theta$ in polar coordinates,
we obtain the expressions of all smooth solutions with $C^0$ Dirichlet boundary condition.  In particular, it shows that (i) some solutions are found, which are H\"{o}lder continuous on  the boundary, but their gradients blow up at the corner; (ii) all solutions in  the
entire plane of $\mathbb{R}^2$ like harmonic functions or Stokes equations, are polynomial expressions.
\end{abstract}

{\small {\bf Keywords:} classification of solutions; Liouville type theorem; steady Navier-Stokes equations;
separation of variables.}\\

{\small {\bf MSC (2020):} 35Q30; 35B53; 76D05.}


\section{Introduction}
Consider the incompressible Navier-Stokes equations on the whole space $\mathbb{R}^n$:
\begin{eqnarray}\label{SNS}
\left\{
  \begin{array}{ll}
    -\triangle u+u\cdot\nabla u+\nabla p=0,  \\
   \mbox{div}~u=0.
  \end{array}
\right.
\end{eqnarray}
One challenging problem is
to investigate the classification of solutions of  (\ref{SNS}), which is related to the celebrated paper by Koch-Nadirashvili-Seregin-Sverak \cite{KNSS}, where the regularity problem of the non-stationary Navier-Stokes equations was reduced to Liouville type theorem of bounded ancient solutions. As they said in \cite{KNSS} ``The case of general 3-dimensional fields is,
as far as we know, completely open. In fact, it is open even in the steady-state case ($u$
independent of $t$)."
By assuming the additional finite Dirichlet integral, Liouville theorem is proved by Galdi \cite{Galdi} for $u\in L^{\frac92}(\mathbb{R}^3)$. For more references, we refer to \cite{Chae}, \cite{Se} and the references therein.

However, even for the 2-dimensional case, the problem of classification of solutions for the steady Navier-Stokes equations is not solved.
As the case of 2D, Gilbarg-Weinberger  \cite{GW1978} proved the above Liouville type theorem by assuming the finite Dirichlet integral, where they made use of the fact that the vorticity function satisfies
a nice elliptic equation to which a maximum principle applies. When the smooth solution $u$ is bounded, a Liouville theorem being more in the spirit of the classical one for entire analytic functions
was obtained by in \cite{KNSS} as a byproduct of their
work on the non-stationary case. If $\nabla u\in L^{q}(\mathbb{R}^2)$ with $1<p<\infty$, the constant $u$ follows by the first author in \cite{Wang} by using the growth estimate of $\mathcal{D}^{1,q}$ functions (see also \cite{KTW} for another approach). As suggested by Fuchs-Zhong in \cite{FZ2011}:
\begin{center}
``\emph{Suppose that
$\lim_{|x|\rightarrow\infty}|x|^{-1}|u(x)|=0.$ Does the constancy of u follow?}"
\end{center}
It's true for harmonic functions,
 since the linear solutions are the counterexamples; see also Yau \cite{Yau} and Li-Tam \cite{Li-Tam}, where they considered the space of harmonic functions on complete manifold with
nonnegative Ricci curvature with linear growth. When $
\limsup_{|x|\rightarrow\infty}|x|^{-\alpha}|u(x)|\leq C
$ with $\alpha\in [0, 1/7)$, $u$ is a constant vector  \cite{FZ2011}.
The component is improved to $\alpha<\frac13$ with help of the vorticity equation  by Bildhauer-Fuchs-Zhang in \cite{BFZ2013}.

In this note, our purpose is to classify the solutions for the 2D steady Navier-Stokes equations by separating variables.

Let $\Omega$ be the whole space $\mathbb{R}^2$, the half-space $\mathbb{R}^2_+$, or any cone domain of
$\{(r,\theta); \alpha<\theta<\beta,~0<r<\infty\}$ with $0\leq \alpha<\beta\leq2\pi$.
Our first result is as follows.
\begin{Theorem}\label{mainthm1}
Suppose that
$(u, p)\in C^3(\Omega)\times C^1(\Omega)$ is a solution of (\ref{SNS}), and $u\in C^0(\bar{\Omega})$, which has the form
$$u(x,y)=\varphi(r)\left(
                              \begin{array}{c}
                                v_1(\theta) \\
                                v_2(\theta) \\
                              \end{array}
                            \right).$$
Then, $(u, p)$ can only be expressed in one of the following types:

(i)\begin{equation*}\label{intro1}
  u=\left(
       \begin{array}{c}
         C_1 \\
         C_2\\
       \end{array}
     \right),~~~
     p=C_3;
\end{equation*}

(ii)\begin{equation*}\label{intro2}
  u=\left(
           \begin{array}{c}
           C_1x+C_2y \\
           C_3x-C_1y \\
           \end{array}
           \right),~~~
 p=-\frac{1}{2}(C_1^2+C_2C_3)(x^2+y^2)+C_4;
\end{equation*}

(iii)\begin{equation*}\label{intro3}
\begin{split}
&u=\left(
       \begin{array}{c}
         (C_2+C_3)x^2+(3C_2-C_3)y^2+2(C_1+C_4)xy\\
         (C_4-3C_1)x^2-(C_1+C_4)y^2-2(C_2+C_3)xy\\
       \end{array}
     \right),\\
&p=\frac{1}{2}(C_1^2+C_2^2-C_3^2-C_4^2)(x^2+y^2)^2+8C_2x-8C_1y+C_5,
\end{split}
\end{equation*}
with $C_1, C_2, C_3, C_4$ satisfying
\begin{equation}\label{intro3a}
\left\{
  \begin{array}{ll}
    C_1C_3+C_2C_4-2C_1C_2=0,  \\
   C_1C_4-C_2C_3+C_1^2-C_2^2=0.
  \end{array}
\right.
\end{equation}

(iv)\begin{equation*}\label{intro4}
  u=r^\lambda\left(
       \begin{array}{c}
         C_1\cos(\lambda\theta)+C_2\sin(\lambda\theta) \\
         C_2\cos(\lambda\theta)-C_1\sin(\lambda\theta)\\
       \end{array}
     \right),~~~
 p=-\frac{1}{2}(C_1^2+C_2^2)r^{2\lambda}+C_3.
\end{equation*}
If $\Omega=\mathbb{R}^2$, $\lambda\geq3$ and $\lambda\in\mathbb{N}$,
otherwise $\lambda\in (0,1)\cup(1,2)\cup(2,\infty)$.

(v) If $\Omega\neq\mathbb{R}^2$,
\beno\label{case1}
u=(C_1+C_2\ln r)\left(
                              \begin{array}{c}
                                -y\\
                                x \\
                              \end{array}
                            \right),
\eeno
\beno
p=\frac12 r^2\Big[C_2^2\ln^2r+(2C_1C_2-C_2^2)\ln r+C_1^2-C_1C_2+\frac{1}{2}C_2^2\Big]+2C_2\theta+C_3.
\eeno
\end{Theorem}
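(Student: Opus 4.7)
My plan is to work entirely in polar coordinates. Writing the ansatz in polar components as $u_r = \varphi(r) g(\theta)$, $u_\theta = \varphi(r) h(\theta)$ with $g := v_1\cos\theta + v_2\sin\theta$ and $h := -v_1\sin\theta + v_2\cos\theta$, the incompressibility equation reduces to $g(\varphi+r\varphi') + \varphi h' = 0$. Standard separation of variables then forces exactly one of two alternatives: \textbf{(A)} $g \equiv 0$ and $h$ constant (purely azimuthal flow), or \textbf{(B)} $\varphi(r) = r^\lambda$ up to a multiplicative constant with $h' = -(\lambda+1)g$.

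In Alternative (A) I would write $u = f(r)(-\sin\theta, \cos\theta)$ and substitute directly into (\ref{SNS}) in polar form; the radial equation becomes $\partial_r p = f^2/r$ and the angular one $\partial_\theta p = r f'' + f' - f/r$, and the compatibility of mixed partials of $p$ forces the angular side to be a constant $k$, i.e.\ the inhomogeneous Euler equation $r^2 f'' + rf' - f = kr$ with general solution $f = C_1 r + C_2/r + (k/2) r\ln r$. The regularity requirement $u\in C^0(\bar\Omega)$ then rules out the $1/r$ mode in every domain under consideration (the origin always lies in $\bar\Omega$), while $\Omega = \mathbb{R}^2$ additionally kills the $k$-mode through single-valuedness of $p$; the surviving pieces produce cases (i), (ii), and case (v) for genuine cone-like domains.

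In Alternative (B) the cleanest approach is via the stream function $\psi = r^c G(\theta)$ with $c = \lambda + 1$, for which $u_r = r^{c-1} G'$, $u_\theta = -c r^{c-1} G$, and the scalar vorticity is $\omega = -\Delta\psi = r^{c-2} M(\theta)$ with $M := -(c^2 G + G'')$. Plugging into the scalar vorticity equation $u\cdot\nabla\omega = \Delta\omega$ and separating the two distinct $r$-powers (convective at $r^{2c-4}$ and dissipative at $r^{c-4}$, distinct whenever $c\neq 0$) yields the decoupled pair of angular ODEs
\beno
M'' + (c-2)^2 M = 0, \qquad (c-2)G'M - cGM' = 0.
\eeno
The first solves $M$ explicitly as $A\cos((c-2)\theta)+B\sin((c-2)\theta)$; the second splits into the irrotational branch $M\equiv 0$, where $G'' + c^2 G = 0$ produces (after Cartesian conversion) case (iv), and the vortical branch $M\not\equiv 0$, where integration gives the pointwise relation $G = KM^{c/(c-2)}$.

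Feeding $G = KM^n$ with $n = c/(c-2)$ back into $M = -c^2 G - G''$ and exploiting the first integral $(M')^2 + (c-2)^2 M^2 = (c-2)^2(A^2+B^2)$, the algebraic cancellation $c^2 = n^2(c-2)^2$ collapses the identity to $M \equiv -Kn(n-1)(c-2)^2(A^2+B^2) M^{n-2}$; matching powers of $M$ pins down $n = 3$, i.e.\ $c=3$ and $\lambda=2$ (the degenerate value $c=2$ is handled separately and reproduces case (ii)). For $c=3$ I would then Fourier-analyze the compatibility $(c-2)G'M - cGM' = 0$ with $G$ a general cubic in $\cos\theta$ and $\sin\theta$; the two surviving compatibility conditions at $\sin 2\theta$ and $\cos 2\theta$ are exactly (\ref{intro3a}), and case (iii) follows. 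Finally, the regularity hypothesis prunes the allowed $\lambda$: on $\mathbb{R}^2$, single-valuedness and $C^3$ smoothness at the origin force $\lambda\in\mathbb{N}$ with $\lambda\geq 3$ after (i)--(iii) are subtracted, while on a cone-like domain only $\lambda>0$ is needed, yielding the stated range $(0,1)\cup(1,2)\cup(2,\infty)$. I expect the most delicate step to be the vortical branch's algebraic matching: justifying that $G = KM^{c/(c-2)}$ extends globally across the isolated zeros of $M$ as a single relation with a common constant $K$, so that the power-matching really does force $n=3$ rather than a patchwork evading it.
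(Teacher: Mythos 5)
Your plan is essentially correct, but it follows a genuinely different route from the paper's, and the comparison is worth recording. The paper never introduces a stream function: it works with $A=\cos\theta\,v_1+\sin\theta\,v_2$ and $L=\sin\theta\,v_1-\cos\theta\,v_2$, reduces the vorticity equation to the pair $H''+(\lambda-1)^2H=0$ and $(\lambda-1)HA-H'L=0$ with $H=A'+(\lambda+1)L$ (in your notation $H=-M$, $L=cG$, $A=G'$, $c=\lambda+1$, so the two systems are identical), and then solves the \emph{inhomogeneous} ODE $A''+(\lambda+1)^2A=(\lambda-1)\bigl[C_2\cos((\lambda-1)\theta)-C_1\sin((\lambda-1)\theta)\bigr]$ explicitly — with a separate resonant computation at $\lambda=0$ — before matching Fourier coefficients in the constraint to force $C_1=C_2=0$ for all $\lambda\neq 0,1,2$ and to extract (\ref{intro3a}) at $\lambda=2$. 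Your first-integral/power-matching argument reaches the same dichotomy more uniformly: I checked that with $n=c/(c-2)$ one has $c^2-n(c-2)^2=2c=n(n-1)(c-2)^2$, so the $M^n$ term does cancel and the identity collapses to $M\equiv-2cK(A^2+B^2)M^{n-2}$, forcing $n=3$, i.e.\ $\lambda=2$; this avoids the paper's resonance case-split at $\lambda=0$ entirely. The ``delicate'' patching step you flag is in fact not needed: on any maximal interval where $M\neq0$, the constraint $(c-2)G'M=cGM'$ is a first-order \emph{linear} ODE for $G$, so by uniqueness either $G$ never vanishes there (giving $n=3$) or $G\equiv0$ there, whence $M=-(c^2G+G'')\equiv0$ there, a contradiction; a single such interval pins down $c=3$, and for $c=3$ you recover $G$ globally by your Fourier computation rather than by patching. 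Your treatment of the azimuthal branch via pressure compatibility (rather than the paper's vorticity equation) is also a legitimate variant and yields $p$ for free. Three small repairs: the general cases (i) and (ii) actually arise from your alternative (B) at $\lambda=0$ and $\lambda=1$ (alternative (A) only produces the rigid-rotation subcase of (ii) together with (v)); you must dispose of $c=0$, where the two $r$-powers in the vorticity equation coincide, by noting that $u\in C^0(\bar\Omega)$ with $0\in\bar\Omega$ excludes $\varphi\sim r^{-1}$; and since the theorem asserts explicit pressures in (ii)--(iv), you still owe the (routine) verification that $\Delta u-u\cdot\nabla u$ integrates to the stated $p$, where in case (iii) the simplification uses (\ref{intro3a}).
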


\begin{Remark}\label{remark1}
In the condition of (iii), the coefficients of $u_1$ are proportional to those of $u_2$, since equations (\ref{intro3a}) are equivalent to
$$\frac{C_2+C_3}{C_4-3C_1}=\frac{3C_2-C_3}{-(C_1+C_4)}=\frac{(C_1+C_4)}{-(C_2+C_3)}.$$
\end{Remark}

\begin{Remark}[Boundary blow-up phenomenon] First,
the solutions in (v) show that
\beno
  \nabla u(x,y)=\left(
                              \begin{array}{cc}
                               -C_2\sin\theta\cos\theta  &-(C_1+C_2\ln r+C_2\sin^2\theta) \\
                               C_1+C_2\ln r+C_2\cos^2\theta  & C_2\sin\theta\cos\theta\\
                              \end{array}
                            \right),
\eeno
which blow up at the corner of $r=0$. However, $u\in C^\gamma(\bar{\Omega})$ locally for any $0<\gamma<1.$ This is different from the case in \cite{KLLT}, where the authors consider a class of H\"{o}lder continuous boundary data on the time and prove there exist unbounded gradients at boundary. Second, our examples show that the singularity of the solution does not depend on the regularity of the boundary (for example, the case of $\mathbb{R}^2_+$ when $\alpha=0,\beta=\pi$). Also, for boundary regularity criteria of steady Navier-Stokes equations (for example, see \cite{Liu-Wang} for the boundary  H\"{o}lder regularity of 6D steady Navier-Stokes equations), it's impossible to prove the uniform boundary $C^\gamma$ regularity with $\gamma>\gamma_0>0$ independent of $u$ due to the examples in (iv) for nontrivial boundary data.
\end{Remark}


More generally, let $u$ be the form of
\begin{equation}\label{gene.form}
  u(x,y)=\left(
                              \begin{array}{c}
                                v_1(\theta)\varphi_1(r) \\
                                v_2(\theta)\varphi_2(r) \\
                              \end{array}
                            \right),
\end{equation}
and we have the following conclusions.
\begin{Theorem}\label{mainthm3}
Suppose that
$(u, p)\in C^3(\Omega)\times C^1(\Omega)$ is a solution of (\ref{SNS}) with the form of (\ref{gene.form}). Moreover, $u\in C^0(\bar{\Omega})$.
Then, $(u, p)$  can only be expressed as one of the forms in
(i), (ii), (iii), (iv) and (v)  in Theorem \ref{mainthm1}, or one of the following two types:
\begin{equation*}\label{2ndcase2.2.2.1}
u=\left(
       \begin{array}{c}
         C_1 \\
         C_2x\\
       \end{array}
     \right),~~~
     p=-C_1C_2y+C_3,
\end{equation*}
and
\begin{equation*}
u=\left(
       \begin{array}{c}
         C_1y \\
         C_2\\
       \end{array}
     \right),~~~
     p=-C_1C_2x+C_3.
\end{equation*}
\end{Theorem}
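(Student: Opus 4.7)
The plan is to reduce as much as possible to Theorem \ref{mainthm1}, isolating the two new families as an exceptional regime. I would first substitute the ansatz \eqref{gene.form} into $\operatorname{div}\,u = 0$; computing in polar coordinates and multiplying through by $r$ gives the separated identity
\[
r\varphi_1'(r)\,v_1(\theta)\cos\theta + r\varphi_2'(r)\,v_2(\theta)\sin\theta - \varphi_1(r)\,v_1'(\theta)\sin\theta + \varphi_2(r)\,v_2'(\theta)\cos\theta = 0.
\]
If $\varphi_1$ and $\varphi_2$ are proportional (in particular if one vanishes identically), then after rescaling $v_1$ we may take $\varphi_1 = \varphi_2$; the ansatz becomes the one treated in Theorem \ref{mainthm1}, and the classification (i)--(v) applies directly.

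The real work lies in the non-proportional case. I would view the identity above as a linear relation among the four $r$-functions $\{\varphi_1,\varphi_2,r\varphi_1',r\varphi_2'\}$ with $\theta$-dependent coefficients, and enumerate the possible linear dependencies. The generic subcase (in which these four $r$-functions are linearly independent over $\mathbb{R}$) forces $v_1\cos\theta$, $v_2\sin\theta$, $v_1'\sin\theta$ and $v_2'\cos\theta$ to vanish identically in $\theta$, hence $v_1\equiv v_2\equiv 0$. Each degenerate subcase, in which additional relations hold among these $r$-functions, forces $\varphi_1$ and $\varphi_2$ to be pure powers $r^{\lambda_i}$ or to involve a logarithm, and a careful check shows that, up to the symmetry $(u_1,u_2,x,y)\leftrightarrow(u_2,u_1,y,x)$ exchanging the two velocity components, the only possibility compatible with $\varphi_1$ and $\varphi_2$ being non-proportional is $\varphi_1\equiv 1$. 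With $u_1 = v_1(\theta)$ depending on $\theta$ alone, continuity of $u$ at the cone apex $0\in\bar\Omega$ forces $v_1$ to be a constant $C_1$, and the divergence-free condition then collapses to $\partial_y u_2 = 0$, so $u_2 = f(x)$ for a smooth function $f$.

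To finish I would exploit the momentum equations. With $u = (C_1,f(x))$ they give $\partial_x p = 0$ and $\partial_y p = f''(x) - C_1 f'(x)$, and the cross-derivative identity $\partial_x\partial_y p = 0$ then yields the ODE $f'''(x) = C_1 f''(x)$. Separability $f(r\cos\theta) = v_2(\theta)\varphi_2(r)$ restricts $f$ to a single monomial $f(x) = Cx^k$; testing the ODE against this restriction leaves only $k=0$ (the constant case, already in (i) of Theorem \ref{mainthm1}), $k=2$ with $C_1=0$ (a subcase of (iii)), and the new case $k=1$, producing $u = (C_1,C_2 x)$ with $p = -C_1C_2 y + C_3$. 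The symmetric reduction $\varphi_2\equiv 1$ yields the companion family $u=(C_1 y,C_2)$ with $p = -C_1 C_2 x + C_3$. The main obstacle I anticipate is the linear-algebraic bookkeeping in the non-proportional regime: one must verify exhaustively that every degenerate configuration of $\{\varphi_1,\varphi_2,r\varphi_1',r\varphi_2'\}$ either contradicts $\varphi_1\not\propto\varphi_2$, violates the $C^0$ continuity at the apex, or collapses back into a solution already listed in Theorem \ref{mainthm1}.
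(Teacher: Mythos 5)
Your opening reduction (proportional $\varphi_1,\varphi_2$ collapses to Theorem \ref{mainthm1}) and your endgame (the momentum equations for $u=(C_1,f(x))$ forcing $f'''=C_1f''$ and hence the two new linear families) both match what the paper does in substance. But there is a genuine gap in the middle: you claim that enumerating the linear dependencies among $\{\varphi_1,\varphi_2,r\varphi_1',r\varphi_2'\}$ in the divergence identity alone forces, in every non-proportional degenerate configuration, either a contradiction or $\varphi_1\equiv 1$ up to the $x\leftrightarrow y$ symmetry. That is false. The divergence-free condition admits a large family of non-proportional separated fields: for instance $\varphi_1=r^{3}$, $v_1=\sin^{3}\theta$, $\varphi_2=r^{5}$, $v_2=\cos^{5}\theta$ satisfies
\begin{equation*}
\cos\theta\, v_1\varphi_1'+\sin\theta\, v_2\varphi_2'+\cos\theta\, v_2'\frac{\varphi_2}{r}-\sin\theta\, v_1'\frac{\varphi_1}{r}=0
\end{equation*}
identically, with $\varphi_1\not\propto\varphi_2$ and neither radial factor constant (more generally, any $v_1=C\sin^{a}\theta$, $\varphi_1=r^{a}$, $v_2=C'\cos^{d}\theta$, $\varphi_2=r^{d}$ works). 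Ruling such configurations out is precisely the hard part, and it cannot be done with $\operatorname{div} u=0$ plus continuity at the apex; it requires the vorticity equation $\Delta w=u\cdot\nabla w$.

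The paper's route makes this explicit: when $\cos\theta\,v_1$ and $\sin\theta\,v_2$ are linearly independent, Cramer's rule yields a constant-coefficient system $r\varphi_1'=a\varphi_1+b\varphi_2$, $r\varphi_2'=c\varphi_1+d\varphi_2$ together with first-order ODEs for $v_1,v_2$; Lemma \ref{lemma3} then lists all possible $(\varphi_1,\varphi_2)$ according to $b$ and $\delta=(a-d)^2+4bc$, and each case is eliminated (or, in the single surviving subcase $b=c=0$, $\{a,d\}=\{0,1\}$, reduced to your two new families) by substituting into $\Delta w=u\cdot\nabla w$, matching powers of $r$, and invoking Lemma \ref{lemma4} to turn $v_1v_2\equiv 0$ into a contradiction. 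This case analysis is the bulk of the proof, and your proposal defers it entirely to ``a careful check'' that, as stated, is based on the wrong equation. To repair the argument you would need to bring the momentum (or vorticity) equation into the non-proportional regime before concluding anything about $\varphi_1$.
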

This theorem immediately leads to the following conclusion.
\begin{Corollary}\label{coro1}
Suppose that
$(u, p)$ satisfies the assumptions of Theorem \ref{mainthm3} with $\Omega=\mathbb{R}^2$, then $u$ and $p$ must be polynomials, which is similar to harmonic functions on the whole space.
\end{Corollary}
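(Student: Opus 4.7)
The plan is to apply Theorem \ref{mainthm3} directly and inspect each surviving family on $\Omega = \mathbb{R}^2$. The corollary makes no claim beyond the classification already delivered, so what remains is to verify that every admissible form, when specialized to the whole plane, produces polynomial $u$ and $p$. The strategy is therefore a direct case check rather than a new argument.

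Cases (i), (ii), (iii) of Theorem \ref{mainthm1} and the two additional types appended in Theorem \ref{mainthm3} are already written in Cartesian coordinates as polynomials of degree at most two in $u$ and at most four in $p$, so nothing needs to be verified there. Case (v) is excluded outright by its own hypothesis $\Omega \neq \mathbb{R}^2$, which conveniently removes the logarithmic branch from the list. The one family requiring a short verification is (iv): on $\mathbb{R}^2$, Theorem \ref{mainthm3} forces $\lambda$ to be an integer with $\lambda \geq 3$, so the velocity components $r^\lambda \cos(\lambda\theta)$ and $r^\lambda \sin(\lambda\theta)$ are the real and imaginary parts of $z^\lambda$ with $z = x + iy$, hence homogeneous harmonic polynomials of degree $\lambda$. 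The pressure is a constant multiple of $r^{2\lambda} = (x^2 + y^2)^\lambda$, again a polynomial. Taking all branches together, both $u$ and $p$ are polynomials.

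The real difficulty of the statement lies not in this final inspection but in the upstream classification: the integrality and threshold restrictions on $\lambda$ in case (iv), together with the exclusion of the logarithmic case (v) on the full plane, are precisely the ingredients that make the corollary work, and both are delivered by Theorem \ref{mainthm3}. Once those restrictions are in hand, the corollary reduces to the observation above, which is the natural 2D Navier--Stokes analogue of the classical fact that an entire harmonic function of polynomial growth is a polynomial.
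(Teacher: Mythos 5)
Your proof is correct and matches the paper's intent exactly: the paper offers no separate argument, stating only that Theorem \ref{mainthm3} ``immediately leads to'' the corollary, i.e.\ the same case-by-case inspection you carry out, with the only nontrivial observations being that case (v) is excluded on $\mathbb{R}^2$ and that case (iv) with $\lambda\in\mathbb{N}$, $\lambda\geq3$ gives the harmonic polynomials $\mathrm{Re}(z^\lambda)$, $\mathrm{Im}(z^\lambda)$ and the polynomial pressure $-\tfrac12(C_1^2+C_2^2)(x^2+y^2)^\lambda+C_3$. Nothing further is needed.
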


As an application, we prove a sharp and extended Liouville theorem to any cone domains for (\ref{SNS}) when $u$ has the form (\ref{gene.form}), which  answer the question in \cite{FZ2011} in this setting.
\begin{Corollary}\label{coro2}
Suppose that
$(u, p)$ satisfies the assumptions of Theorem \ref{mainthm3} with $u\in C^1(\bar{\Omega})$ and
\begin{equation}\label{growth}
\lim_{|x|\rightarrow\infty}|x|^{-1}|u(x)|=0,
\end{equation}
then $(u, p)$ must be constant.
\end{Corollary}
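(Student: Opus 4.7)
The plan is to apply the full classification in Theorem \ref{mainthm3} and then eliminate every non-trivial item on the list using the growth assumption (\ref{growth}) together with the regularity $u\in C^1(\bar\Omega)$. A preliminary observation is that, for every cone-like $\Omega$ considered in the paper, the vertex $r=0$ lies in $\bar\Omega$; this will be essential in the borderline cases.

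First I would dispose of the polynomially-growing entries. Type (i) is already constant. Type (ii) is homogeneous of degree $1$, so $r^{-1}|u|$ approaches a constant along each ray, and that constant must be $0$ by (\ref{growth}); hence all coefficients vanish and $u\equiv 0$. Type (iii) is homogeneous of degree $2$ and is killed a fortiori. The two extra forms from Theorem \ref{mainthm3}, namely $u=(C_1,C_2 x)$ and $u=(C_1 y,C_2)$, grow linearly in $|x|$ along the $x$- or $y$-axis, so (\ref{growth}) forces $C_2=0$ (resp.\ $C_1=0$) and reduces $u$ to a constant.

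The more delicate cases are (iv) and (v). For type (iv), $u=r^\lambda V(\theta)$ with $V\not\equiv 0$, the range $\lambda>1$ (covering $\lambda\in(1,2)\cup(2,\infty)$ in the cone case and $\lambda\geq 3$ in the whole-plane case) is immediately ruled out by $r^{-1}|u|\sim r^{\lambda-1}\to\infty$. The subtle subcase is $\lambda\in(0,1)$, permitted only when $\Omega\neq\mathbb{R}^2$: here (\ref{growth}) is actually satisfied, so the growth hypothesis alone is not enough. Instead I would invoke $u\in C^1(\bar\Omega)$: since $|\nabla u|\sim r^{\lambda-1}$ blows up as $r\to 0$ and the vertex belongs to $\bar\Omega$, this subcase cannot occur. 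For type (v) the same mechanism applies: the Remark in the paper shows that $\nabla u$ contains $C_2\ln r$ and $C_2$ terms that are unbounded as $r\to 0$, so $C^1(\bar\Omega)$ forces $C_2=0$; the remaining field $C_1(-y,x)$ is a subcase of (ii) and is killed again by (\ref{growth}).

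Once $u$ is shown to be constant, substituting into (\ref{SNS}) gives $\nabla p=0$, so $p$ is also constant. The main obstacle, though not heavy, is the single borderline subcase $\lambda\in(0,1)$ of (iv): neither the growth assumption alone nor $C^0(\bar\Omega)$ regularity alone excludes it, and the strengthening from $C^0(\bar\Omega)$ in Theorem \ref{mainthm3} to $C^1(\bar\Omega)$ in the corollary is precisely tailored to eliminate it via the $r^{\lambda-1}$ blow-up of $\nabla u$ at the cone vertex.
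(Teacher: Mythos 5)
Your proof is correct and is exactly the argument the paper intends: the corollary is left as an immediate consequence of the classification in Theorem \ref{mainthm3}, obtained by eliminating each listed type using (\ref{growth}) for the entries of homogeneity $\geq 1$ and the strengthened hypothesis $u\in C^1(\bar\Omega)$ for the two cases with gradient blow-up at the vertex, namely type (iv) with $\lambda\in(0,1)$ and type (v) with $C_2\neq 0$. You also correctly identify that the $C^1(\bar\Omega)$ assumption is precisely what the $\lambda\in(0,1)$ subcase requires, since (\ref{growth}) alone does not exclude it.
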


\section{Preliminaries}

We state some preliminary lemmas before proving the main theorems, which play important roles in our following arguments.

In this part, we let $I$ and $J$ be intervals in $\mathbb{R}$.

\begin{Lemma}\label{lemma1}
Suppose that
\begin{equation}\label{lemma1a}
  A(\theta)f(r)=B(\theta)g(r), ~~~~~\theta\in I, ~~~r\in J,
\end{equation}
If $g(r)\not\equiv 0$, then either
$$A(\theta)=B(\theta)\equiv 0,$$
or, there exists a constant $\lambda$ such that
$$B(\theta)=\lambda A(\theta), ~~~~~f(r)=\lambda g(r).$$
\end{Lemma}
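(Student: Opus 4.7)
The plan is to exploit the fact that $g(r)\not\equiv 0$ to evaluate the identity at a point where $g$ is nonzero, thereby reducing the functional equation to proportionality. First, I would split into cases according to whether $A(\theta)\equiv 0$ on $I$ or not.

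In the case $A(\theta)\equiv 0$, the identity \eqref{lemma1a} collapses to $B(\theta)g(r)=0$ for all $\theta,r$. Since $g\not\equiv 0$, I can pick $r_1\in J$ with $g(r_1)\neq 0$ and read off $B(\theta)=0$ for every $\theta\in I$; this gives the first alternative $A\equiv B\equiv 0$.

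In the case $A\not\equiv 0$, I would choose some $\theta_0\in I$ with $A(\theta_0)\neq 0$ and substitute into \eqref{lemma1a} to obtain
\[
f(r)=\frac{B(\theta_0)}{A(\theta_0)}\,g(r),\qquad r\in J.
\]
Setting $\lambda:=B(\theta_0)/A(\theta_0)$ gives $f=\lambda g$. Feeding this back into \eqref{lemma1a} yields $\bigl(B(\theta)-\lambda A(\theta)\bigr)g(r)=0$, and evaluating at any $r$ with $g(r)\neq 0$ forces $B(\theta)=\lambda A(\theta)$ on $I$, which is the second alternative.

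There is no real obstacle here: the argument is a standard separation-of-variables observation, and no regularity beyond pointwise validity of \eqref{lemma1a} is used. The only subtlety is to be careful that the same constant $\lambda$ works on both sides, which is handled by substituting $f=\lambda g$ back into the original identity rather than independently picking points for $f$ and $B$.
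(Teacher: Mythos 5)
Your argument is correct and is essentially identical to the paper's proof: both split on whether $A\equiv 0$, use a point where $g\neq 0$ to conclude $B\equiv 0$ in the first case, and in the second case evaluate at $\theta_0$ with $A(\theta_0)\neq 0$ to define $\lambda$ and then substitute $f=\lambda g$ back to get $B=\lambda A$. Nothing further to add.
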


\begin{proof}
Assume that $g(r_0)\neq0$ without loss of generality.

\textbf{Case 1}: $A(\theta)\equiv0$. Then
$B(\theta)g(r)\equiv0$ for $\theta\in I$ and $r\in J,$
which implies
$B(\theta)\equiv0$ for $\theta\in I$ due to $g(r_0)\neq0$.

\textbf{Case 2}: $A(\theta)\not\equiv0$. We assume that $A(\theta_0)\neq0$ for some $\theta_0\in I$, then
\begin{equation}\label{lemma1b}
  f(r)=\frac{B(\theta_0)}{A(\theta_0)}g(r)=:\lambda g(r),~~~~~r\in J,
\end{equation}
where $\lambda=\frac{B(\theta_0)}{A(\theta_0)}$. Substituting equation (\ref{lemma1b}) into (\ref{lemma1a}), there holds
$$  \lambda A(\theta)g(r)=B(\theta)g(r), ~~~~~\theta\in I, ~~~r\in J,$$
which yields
$B(\theta)=\lambda A(\theta)$ for $\theta\in I$ by taking $r=r_0.$
The proof is complete.
\end{proof}

\begin{Lemma}\label{lemma2new}Let $v=v(\theta), ~\theta\in I$.\\
(1) If $v\in C^1(I)$ and satisfies
$\sin\theta v+\cos\theta v'=0,$
then
$v=C\cos\theta.$\\
(2) If $v\in C^2(I)$ and satisfies
$2\sin\theta v+\cos\theta v'=0,$
then
$v=C\cos^2\theta.$
\end{Lemma}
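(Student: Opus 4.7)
The plan is to recognize each identity as a first-order linear ODE for $v$ and solve it by means of an explicit integrating factor; the only mild subtlety is patching the integration constant across any points of $I$ where $\cos\theta$ vanishes, which the assumed regularity handles immediately.

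For part (1), on any connected subinterval of $I$ on which $\cos\theta\neq 0$, I would rewrite the relation as
\[
\frac{d}{d\theta}\!\left(\frac{v(\theta)}{\cos\theta}\right)=\frac{v'(\theta)\cos\theta+v(\theta)\sin\theta}{\cos^{2}\theta}=0,
\]
so that $v(\theta)=C\cos\theta$ on each such subinterval for some constant $C$. If $I$ contains a point $\theta_{0}$ with $\cos\theta_{0}=0$, the ODE itself forces $v(\theta_{0})=0$, consistent with $C\cos\theta_{0}=0$, and matching the one-sided values of $v'(\theta_{0})=-C\sin\theta_{0}$ (which exist by the $C^{1}$ hypothesis and must agree there) forces the constant $C$ to be the same on adjacent components. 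Hence $v=C\cos\theta$ throughout $I$. For part (2), I would repeat the argument with the integrating factor $\cos^{-2}\theta$: on each subinterval where $\cos\theta\neq 0$,
\[
\frac{d}{d\theta}\!\left(\frac{v(\theta)}{\cos^{2}\theta}\right)=\frac{v'(\theta)\cos\theta+2v(\theta)\sin\theta}{\cos^{3}\theta}=0,
\]
giving $v=C\cos^{2}\theta$ locally, and the $C^{2}$ regularity (in fact much more than needed) patches the constants across any zeros of $\cos\theta$ in $I$, producing $v=C\cos^{2}\theta$ globally.

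The real content of the lemma is spotting the two integrating factors $\sec\theta$ and $\sec^{2}\theta$; once found, integration is immediate. The only potentially delicate point is reconciling constants on either side of a zero of $\cos\theta$, but this is trivial under the hypothesized $C^{1}$ (respectively $C^{2}$) regularity, so I do not anticipate any substantial obstacle.
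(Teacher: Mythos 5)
Your approach is exactly the paper's: on each subinterval where $\cos\theta\neq 0$ you integrate via the factor $\sec\theta$ (resp.\ $\sec^2\theta$) to get $v=C_i\cos\theta$ (resp.\ $v=C_i\cos^2\theta$), and then you match the constants across the zeros of $\cos\theta$ using the assumed regularity; the paper does part (1) this way (matching via continuity of $v'$, since $v'=-C_i\sin\theta$ takes the value $\mp C_i$ at $\theta=\pi/2,\,3\pi/2$) and omits part (2) as ``similar.'' One small but substantive correction: in part (2) the $C^2$ hypothesis is \emph{not} ``much more than needed.'' If $v=C_1\cos^2\theta$ on one side of a zero of $\cos\theta$ and $v=C_2\cos^2\theta$ on the other, then both $v$ and $v'=-C_i\sin 2\theta$ vanish at that zero for \emph{any} choice of constants, so the glued function is a genuine $C^1$ solution of $2\sin\theta\,v+\cos\theta\,v'=0$ with $C_1\neq C_2$; the conclusion of part (2) is false under a mere $C^1$ assumption. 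The constants are forced to agree only by continuity of $v''=-2C_i\cos 2\theta$, which equals $2C_i$ at $\theta=\pi/2$. You should make this second-derivative matching explicit rather than waving at it as trivial.
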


\begin{proof}
 Assume that $I=(0,2\pi)$ for simplicity and denote
$$I_1=(0, \frac{\pi}{2}),~~~I_2=(\frac{\pi}{2}, \frac{3\pi}{2}),~~~~I_3=(\frac{3\pi}{2}, 2\pi),$$
then
$$\cos\theta\neq0,~~~~~\theta\in I_i,~~i=1, 2, 3.$$

(1) In $I_i, ~i=1, 2, 3,$
$$\big(\cos^{-1}\theta v\big)'=\cos^{-2}\theta(\sin\theta v+\cos\theta v')=0,$$
then $$\cos^{-1}\theta v=C_i,~~~v=C_i\cos\theta,~~~~~\theta\in I_i,$$
and
$$v'=-C_i\sin\theta,~~~~~~\theta\in I_i.$$
Then $C_1=C_2=C_3=:C,$ since $v'$ is continuous at $\frac{\pi}{2}$ and $\frac{3\pi}{2}$.
Therefore
$v=C\cos\theta.$

(2) The argument is simialr, and we omitted it.


The proof is complete.
\end{proof}



\begin{Lemma}\label{lemma3}
Suppose that $\varphi_1,~\varphi_2\in C^1 \big((0,+\infty)\big)$ and satisfy
\begin{equation}\label{lem3eqn}
\left\{
  \begin{array}{ll}
r\varphi_1'(r)=a\varphi_1+b\varphi_2,  \\
r\varphi_2'(r)=c\varphi_1+d\varphi_2.
  \end{array}
\right.
\end{equation}
Moreover, let $\delta:=(a-d)^2+4bc$. Then\\
(1) If $b=0$, $d=a$, then
\begin{equation*}
\left\{
    \begin{array}{ll}
\varphi_1=C_1r^a, \\
\varphi_2=(cC_1\ln r+C_2)r^a.
    \end{array}
  \right.
\end{equation*}
(2) If $b=0$, $d\neq a$, then
\begin{equation*}
\left\{
    \begin{array}{ll}
\varphi_1=C_1r^a, \\
\varphi_2=\frac{c}{a-d}C_1r^a+C_2r^d.
    \end{array}
  \right.
\end{equation*}
(3) If $b\neq0$, $\delta>0$, then
\begin{equation*}
\left\{
    \begin{array}{ll}
\varphi_1=C_1r^m+C_2r^n, ~~~~~~~(m>n)\\
\varphi_2=\frac{m-a}{b}C_1r^m+\frac{n-a}{b}C_2r^n,
    \end{array}
  \right.
\end{equation*}
where $m, n$ are two different real roots of equation
\begin{equation}\label{char.eqn}
  \rho^2-(a+d)\rho+ad-bc=0.
\end{equation}
(4) If $b\neq0$, $\delta=0$, then
\begin{equation*}
\left\{
    \begin{array}{ll}
\varphi_1=(C_1\ln r+C_2)r^l, \\
\varphi_2=\big[\frac{l-a}{b}C_1\ln r+\frac{C_1+(l-a)C_2}{b}\big]r^l,
    \end{array}
  \right.
\end{equation*}
where $l$ is the unique real root of  (\ref{char.eqn}).\\
(5) If $b\neq0$, $\delta<0$, then
\begin{equation*}
\left\{
    \begin{array}{ll}
\varphi_1=\big[C_1\cos(\mu\ln r)+C_2\sin(\mu\ln r)\big]r^\lambda, \\
\varphi_2=\big[\frac{(\lambda-a) C_1+\mu C_2}{b}\cos(\mu \ln r)+\frac{(\lambda-a) C_2-\mu C_1}{b}\sin(\mu\ln r)\big]r^\lambda,
    \end{array}
  \right.
\end{equation*}
where $\lambda\pm \mu i$ are the complex roots of (\ref{char.eqn}).
\end{Lemma}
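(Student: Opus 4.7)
The plan is to reduce the system \eqref{lem3eqn} to a linear ODE with constant coefficients via the substitution $t = \ln r$, under which the operator $r\,d/dr$ becomes $d/dt$. The system then reads $\Phi'(t) = M\Phi(t)$ where $\Phi = (\varphi_1,\varphi_2)^T$ and $M$ has the entries $a,b,c,d$. Its characteristic polynomial is exactly \eqref{char.eqn}, with discriminant $(a+d)^2 - 4(ad - bc) = (a-d)^2 + 4bc = \delta$, so the five cases listed correspond precisely to the classical trichotomy of distinct real, repeated, or complex eigenvalues of $M$, together with the subdivision according to whether $b=0$ (so that $M$ is lower triangular and the system partially decouples).

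For the two cases with $b = 0$, I would exploit the decoupling: the first equation $r\varphi_1' = a\varphi_1$ integrates directly to $\varphi_1 = C_1 r^a$. Substituting into the second yields a first-order linear ODE $r\varphi_2' - d\varphi_2 = c C_1 r^a$, which I would solve with the integrating factor $r^{-d}$. When $a = d$, the forcing resonates with the homogeneous solution and produces the $\ln r$ term of case (1); when $a \ne d$, one obtains the particular solution $\tfrac{c}{a-d} C_1 r^a$ of case (2) directly.

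For cases (3)--(5), where $b \ne 0$, the cleanest route is to use the first equation to write $\varphi_2 = (r\varphi_1' - a\varphi_1)/b$, differentiate, and substitute into the second. A short manipulation converts the system into the Cauchy--Euler equation
\begin{equation*}
r^2\varphi_1'' + (1 - a - d)\, r\varphi_1' + (ad - bc)\varphi_1 = 0,
\end{equation*}
whose indicial polynomial coincides with \eqref{char.eqn}. Standard Euler theory then hands us the three normal forms of $\varphi_1$: $C_1 r^m + C_2 r^n$ when $\delta > 0$, $(C_1 \ln r + C_2)r^l$ when $\delta = 0$, and $[C_1 \cos(\mu\ln r) + C_2\sin(\mu\ln r)]r^\lambda$ when $\delta < 0$. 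Reinserting each into $\varphi_2 = (r\varphi_1' - a\varphi_1)/b$ recovers the stated formulas.

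The main obstacle is simply the bookkeeping in case (5): computing $r\varphi_1'$ for the oscillatory $\varphi_1$ generates four trigonometric terms, and collecting them into the compact form $\tfrac{(\lambda - a)C_1 + \mu C_2}{b}\cos(\mu\ln r) + \tfrac{(\lambda - a)C_2 - \mu C_1}{b}\sin(\mu\ln r)$ requires careful tracking of the signs produced by differentiating $\cos(\mu\ln r)$ and $\sin(\mu\ln r)$ with respect to $r$. Otherwise no new ideas are needed beyond the $t = \ln r$ substitution and standard linear ODE theory.
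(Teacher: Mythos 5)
Your proposal is correct and follows essentially the same route as the paper: the substitution $t=\ln r$ turning $r\,d/dr$ into $D=d/dt$, direct integration with the integrating factor $r^{-d}$ in the decoupled cases $b=0$, and elimination via $\varphi_2=(r\varphi_1'-a\varphi_1)/b$ to reach the second-order equation with indicial polynomial \eqref{char.eqn} when $b\neq0$. The only cosmetic difference is that you state the reduced equation in Cauchy--Euler form in $r$ while the paper writes it as the constant-coefficient equation $D^2\varphi_1-(a+d)D\varphi_1+(ad-bc)\varphi_1=0$ in $t$; these are identical under the substitution.
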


\begin{proof}
Let $r=e^t$ and $D=\frac{d}{dt}$, then the equations (\ref{lem3eqn}) become
\begin{equation}\label{lem3eqn-1}
\left\{
  \begin{array}{ll}
D\varphi_1=a\varphi_1+b\varphi_2,  \\
D\varphi_2=c\varphi_1+d\varphi_2.
  \end{array}
\right.
\end{equation}

\textbf{\underline{Case 1: $b=0$.}} The first equation of (\ref{lem3eqn-1}) becomes
$D\varphi_1=a\varphi_1,$
then
\begin{equation}\label{lem3eqn-2}
 \varphi_1=C_1e^{at}=C_1r^a.
\end{equation}
Substituting (\ref{lem3eqn-2}) into the second equation of (\ref{lem3eqn-1}), we get
$$D\varphi_2-d\varphi_2=cC_1e^{at}.$$
Then
\begin{equation}\label{lem3.case1}
  D\big(e^{-dt}\varphi_2\big)=e^{-dt}(D\varphi_2-d\varphi_2)=cC_1e^{(a-d)t},
\end{equation}
which can be divided into the following two situations.

\textbf{Case 1.1}: If $d=a$,
there holds $e^{-at}\varphi_2=cC_1t+C_2,$ and
$$\varphi_2=(cC_1t+C_2)e^{at}=(cC_1\ln r+C_2)r^a.$$

\textbf{Case 1.2}: If $d\neq a$, it follows from (\ref{lem3.case1}) that
$e^{-dt}\varphi_2=\frac{c}{a-d}C_1e^{(a-d)t}+C_2,$ and
$$\varphi_2=\frac{c}{a-d}C_1e^{at}+C_2e^{dt}=\frac{c}{a-d}C_1r^a+C_2r^d.$$

\textbf{\underline{Case 2: $b\neq0$.}} The first equation of (\ref{lem3eqn-1}) implies that
\begin{equation}\label{lem3eqn-3}
\varphi_2=\frac{1}{b}(D\varphi_1-a\varphi_1).
\end{equation}
Substituting (\ref{lem3eqn-3}) into  $(\ref{lem3eqn-1})_2$, we get
\begin{equation}\label{lem3eqn-4}
  D^2\varphi_1-(a+d)D\varphi_1+(ad-bc)\varphi_1=0,
\end{equation}
which has characteristic equation (\ref{char.eqn}).

\textbf{Case 2.1}: If $\delta>0$, equation (\ref{char.eqn}) has two different real roots $m, n~(m>n)$
and  the general solution of (\ref{lem3eqn-4}) is expressed as follows
$$
\varphi_1=C_1e^{mt}+C_2e^{nt}=C_1r^m+C_2r^n.
$$
Substituting this into (\ref{lem3eqn-3}), we get
$$
\varphi_2=\frac{m-a}{b}C_1e^{mt}+\frac{n-a}{b}C_2e^{nt}=\frac{m-a}{b}C_1r^m+\frac{n-a}{b}C_2r^n.
$$

\textbf{Case 2.2}: If $\delta=0$, equation (\ref{char.eqn}) has a unique real root $l$
and the general solution of (\ref{lem3eqn-4}) is
$$\varphi_1=(C_1t+C_2)e^{lt}=(C_1\ln r+C_2)r^l.$$
Substituting this into (\ref{lem3eqn-3}), we get
$$
\varphi_2
=\Big[\frac{l-a}{b}C_1\ln r+\frac{C_1+(l-a)C_2}{b}\Big]r^l.
$$

\textbf{Case 2.3}: If $\delta<0$, equation (\ref{char.eqn}) has complex roots $\lambda\pm\mu i~(\mu\neq0)$
and the general solution of (\ref{lem3eqn-4}) is
$$
\varphi_1=\big[C_1\cos(\mu t)+C_2\sin(\mu t)\big]e^{\lambda t}=\big[C_1\cos(\mu\ln r)+C_2\sin(\mu\ln r)\big]r^\lambda.
$$
Substituting this into (\ref{lem3eqn-3}), we get
\begin{equation*}
  \begin{split}
  \varphi_2=\Big[\frac{(\lambda-a) C_1+\mu C_2}{b}\cos(\mu \ln r)+\frac{(\lambda-a) C_2-\mu C_1}{b}\sin(\mu\ln r)\Big]r^\lambda.
  \end{split}
\end{equation*}
Thus  the proof is complete.
\end{proof}

\begin{Lemma}\label{lemma4}
Suppose that $v_1,~v_2\in C^1(I)$ satisfying
\begin{equation}\label{lem4eqn-1}
\left\{
  \begin{array}{ll}
a\cos\theta v_1+c\sin\theta v_2-\sin\theta v_1'=0, \\
b\cos\theta v_1+d\sin\theta v_2+\cos\theta v_2'=0,
  \end{array}
\right.
\end{equation}
and $v_1v_2\equiv0$. Then we have (i) $v_1\equiv0$ if $b\neq0$; (ii) $v_2\equiv0$ if $c\neq0$.
\end{Lemma}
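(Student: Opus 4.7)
The approach I would take is a short argument by contradiction that turns the pointwise annihilation $v_1 v_2 \equiv 0$ into a local identity $v_2 \equiv 0$ (or $v_1 \equiv 0$) on a suitable open subinterval, after which the ODE system (\ref{lem4eqn-1}) collapses to a purely algebraic obstruction.

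The first step is a dichotomy based solely on continuity: either $v_1 \equiv 0$ on $I$ (in which case (i) is automatic), or the open set $\{\theta \in I : v_1(\theta) \neq 0\}$ is nonempty and therefore contains a nondegenerate open subinterval $U \subset I$. On any such $U$ the hypothesis $v_1 v_2 \equiv 0$ forces $v_2 \equiv 0$, and consequently $v_2' \equiv 0$ as well. The symmetric dichotomy, with the roles of $v_1$ and $v_2$ swapped, supplies the input for (ii).

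To finish (i), I would assume $b \neq 0$ together with $v_1 \not\equiv 0$, pick $U$ as above, and substitute $v_2 \equiv v_2' \equiv 0$ into the second equation of (\ref{lem4eqn-1}) to obtain
\[
b \cos\theta \, v_1(\theta) = 0 \qquad \text{for all } \theta \in U.
\]
Since $v_1$ is nowhere zero on $U$ and $b \neq 0$, this forces $\cos\theta \equiv 0$ on the nondegenerate interval $U$, which is impossible; hence $v_1 \equiv 0$. Part (ii) is handled symmetrically by assuming $c \neq 0$ and $v_2 \not\equiv 0$, selecting an open subinterval on which $v_2$ never vanishes (so that $v_1 \equiv v_1' \equiv 0$ there), and reducing the first equation of (\ref{lem4eqn-1}) to $c \sin\theta \, v_2 \equiv 0$ — again absurd because $\sin\theta$ has only isolated zeros.

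I do not anticipate any genuine obstacle here; the only point demanding a moment's care is the continuity-based promotion of pointwise vanishing to identical vanishing on an open subinterval, together with the elementary observation that neither $\cos\theta$ nor $\sin\theta$ can vanish on any nondegenerate interval.
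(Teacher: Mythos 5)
Your argument is correct, but it is a genuinely different route from the paper's. The paper proceeds globally and algebraically: it multiplies the first equation of (\ref{lem4eqn-1}) by $v_2$ and the second by $v_1$, uses $v_1v_2\equiv 0$ to reduce these to $cv_2^2-v_1'v_2=0$ and $bv_1^2+v_1v_2'=0$, subtracts to recognize the combination $(v_1v_2)'$ and deduce the identity $bv_1^2=cv_2^2$ on all of $I$, and then multiplies once more by $v_1$ (resp.\ $v_2$) to get $bv_1^3=cv_2(v_1v_2)\equiv 0$, killing $v_1$ when $b\neq 0$. You instead localize: on an open subinterval $U$ where (say) $v_1$ is nonvanishing, the hypothesis $v_1v_2\equiv 0$ forces $v_2\equiv 0$ and hence $v_2'\equiv 0$ on $U$, so the second equation collapses to $b\cos\theta\,v_1=0$ there, which is absurd since $\cos\theta$ has only isolated zeros. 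Your version is more elementary (no product-rule trick, no need to cube) and it quietly sidesteps a small implicit step in the paper, namely the division by $\sin\theta$ (resp.\ $\cos\theta$) needed to pass from $\sin\theta\,(cv_2^2-v_1'v_2)=0$ to $cv_2^2-v_1'v_2=0$, which the paper justifies only tacitly by continuity across the isolated zeros. The paper's computation, on the other hand, avoids any discussion of the zero set of $v_1$ or $v_2$ and yields the global relation $bv_1^2=cv_2^2$ as a byproduct. Both proofs are complete and correct for the statement as given.
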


\begin{proof}
Multiplying $(\ref{lem4eqn-1})_1$ by $v_2$, due to $v_1v_2\equiv0$  we get
\begin{equation}\label{lem4eqn-2}
cv_2^2-v_1'v_2=0.
\end{equation}
Similarly, multiplying $(\ref{lem4eqn-1})_2$ by $v_1$, it follows that
\begin{equation}\label{lem4eqn-3}
bv_1^2+v_1v_2'=0.
\end{equation}
Then (\ref{lem4eqn-3}) minus (\ref{lem4eqn-2}) tells us
$bv_1^2-cv_2^2+(v_1v_2)'=0.$
Then $bv_1^2=cv_2^2,$ since $v_1v_2\equiv0$.
Consequently,
$$bv_1^3=cv_2(v_1v_2)\equiv0,\quad cv_2^3=bv_1(v_1v_2)\equiv0.$$
If $b\neq0$, then $v_1\equiv0.$ If $c\neq0$, then $v_2\equiv0.$
The proof is complete.
\end{proof}

\section{Proof of Theorem \ref{mainthm1}}
\begin{proof}[Proof of Theorem \ref{mainthm1}]
Let $w:=\partial_2u_1-\partial_1u_2$ be the vorticity of $u$, then $w$ satisfies the equation
\begin{equation}\label{weqn}
  \triangle w-u\cdot\nabla w=0.
\end{equation}
Throughout this section, we write $v_i(\theta)$, $v_i'(\theta)$, $\varphi(r)$, $\varphi'(r)$ as $v_i$, $v_i'$, $\varphi$, $\varphi'$, $i=1,2.$

Direct computations show that
\begin{equation}\label{divu}
\begin{split}
 \mbox{div}~u=&\big(\cos\theta v_1+\sin\theta v_2\big)\varphi'-\big(\sin\theta v_1'-\cos\theta v_2'\big)\frac{\varphi}{r}\\
     =&:A(\theta)\varphi'-B(\theta)\frac{\varphi}{r};
 \end{split}
\end{equation}
\begin{equation}\label{wexpr}
  w=\big(\sin\theta v_1-\cos\theta v_2\big)\varphi'+\big(\cos\theta v_1'+\sin\theta v_2'\big)\frac{\varphi}{r}.
\end{equation}
The equation $\mbox{div}~u=0$ and (\ref{divu}) yield that
$A(\theta)\varphi'=B(\theta)\frac{\varphi}{r}.$
By Lemma \ref{lemma1}, we have either
$$A(\theta)=B(\theta)\equiv 0,$$
or
$$B(\theta)=\lambda A(\theta), ~~~~~\varphi'=\lambda\frac{\varphi}{r}.$$
Next, we discuss two cases respectively.

\textbf{\underline{Step 1: $A(\theta)=B(\theta)\equiv 0$.}}
$A(\theta)\equiv0$ implies that
\begin{equation}\label{kind1case1a}
v_1=-\tan \theta v_2,
\end{equation}
and $B(\theta)\equiv0$ tells us
\begin{equation}\label{kind1case1b}
-\tan \theta v_1'+v_2'=0.
\end{equation}
Substituting (\ref{kind1case1a}) into (\ref{kind1case1b}), we deduce that
$\sin\theta v_2+\cos\theta v_2'=0.$
Due to $v_2\in C^1$, applying Lemma \ref{lemma2new} we obtain that
\begin{equation}\label{case1-v2expr}
  v_2=C\cos\theta,
\end{equation}
and thus
\begin{equation}\label{case1-v1expr}
 v_1=-C\sin\theta.
\end{equation}
Without loss of generality, we assume that $C\neq0$.
Substituting (\ref{case1-v2expr}) and (\ref{case1-v1expr}) into (\ref{wexpr}), we have
$$w=-C\big(\varphi'+\frac{\varphi}{r}\big),$$
and then
\begin{equation}\label{w1}
\begin{split}
 \Delta w=&\big(\partial_r^2+\frac{1}{r}\partial_r+\frac{1}{r^2}\partial_\theta^2\big)w=-C\big(\partial_r^2+\frac{1}{r}\partial_r\big)\big(\varphi'+\frac{\varphi}{r}\big)\\
  =&-C(\varphi'''+\frac{2\varphi''}{r}-\frac{\varphi'}{r^2}+\frac{\varphi}{r^3}\big);\\
  \partial_1w=&-C\big(\varphi'+\frac{\varphi}{r}\big)'\cos \theta=-C\cos\theta\big(\varphi''+\frac{\varphi'}{r}-\frac{\varphi}{r^2}\big);\\
  \partial_2w=&-C\sin\theta\big(\varphi''+\frac{\varphi'}{r}-\frac{\varphi}{r^2}\big);\\
  u\cdot \nabla w=&\varphi(v_1\partial_1w+v_2\partial_2w)=0.
\end{split}
\end{equation}
Combining (\ref{weqn}) and (\ref{w1}), we obtain
\begin{equation}\label{phieqn}
r^3\varphi'''+2r^2\varphi''-r\varphi'+\varphi=0.
\end{equation}
Let $r=e^t$ and denote $D=\frac{d}{dt}$, then
the equation (\ref{phieqn}) becomes
$$(D+1)(D-1)^2\varphi=0,$$
which has the general solution as
\begin{equation*}
     \varphi=C_1e^t+C_2te^t+C_3e^{-t}=C_1r+C_2r\ln r+C_3r^{-1}.
\end{equation*}
Recall that $u\in C^3(\Omega)\cap C^0(\bar{\Omega})$, then
\begin{equation}\label{wb}
\varphi=\left\{
  \begin{array}{ll}
    C_1r, & \hbox{if $\Omega=\mathbb{R}^2$;} \\
    C_1r+C_2r\ln r, & \hbox{if $\Omega\neq\mathbb{R}^2$.}
  \end{array}
\right.
\end{equation}
By (\ref{case1-v2expr}), (\ref{case1-v1expr}) and (\ref{wb}),
\begin{equation}\label{case1}
u=\left\{
  \begin{array}{ll}
   C_1'\left(
                              \begin{array}{c}
                                -y \\
                                x \\
                              \end{array}
                            \right), & \hbox{if $\Omega=\mathbb{R}^2$;} \\
   (C_1'+C_2'\ln r)\left(
                              \begin{array}{c}
                                -y \\
                                x \\
                              \end{array}
                            \right), & \hbox{if $\Omega\neq\mathbb{R}^2$.}
  \end{array}
\right.
\end{equation}

\textbf{\underline{Step 2: $B(\theta)=\lambda A(\theta), ~\varphi'=\lambda\frac{\varphi}{r}$.}} At this time, we have
\begin{equation}\label{phiexpr}
  \varphi=C r^\lambda.
\end{equation}
Without loss of generality, we assume that $C\neq0$. Denote
$$L(\theta)=\sin\theta v_1-\cos\theta v_2,$$
then it is easy to verify that
\begin{equation}\label{La}
  \begin{split}
  L'=A+B=(\lambda+1)A,\quad
\cos\theta v_1'+\sin\theta v_2'=A'+L.
   \end{split}
\end{equation}
By (\ref{wexpr}), (\ref{phiexpr}) and (\ref{La}) we have
$$w=C\lambda r^{\lambda-1}L+Cr^{\lambda-1}(A'+L)=Cr^{\lambda-1}\big[A'+(\lambda+1)L\big]=:Cr^{\lambda-1}H,$$
where
\begin{equation}\label{HAL}
  H=A'+(\lambda+1)L.
\end{equation}
Then we have
\begin{equation}\label{wc}
\begin{split}
 \Delta w=&\big(\partial_r^2+\frac{1}{r}\partial_r+\frac{1}{r^2}\partial_\theta^2\big)w \\
    =&CH\big(\partial_r^2+\frac{1}{r}\partial_r\big)r^{\lambda-1}+Cr^{\lambda-3}H''\\
    =&Cr^{\lambda-3}\big[H''+(\lambda-1)^2H\big];\\
 u\cdot \nabla w=&\varphi(v_1\partial_1 w+v_2\partial_2 w)\\
 =&\varphi\Big[v_1\Big(\partial_r w\cos\theta-\partial_\theta w\frac{\sin\theta}{r}\Big)+v_2\Big(\partial_r w\sin\theta+\partial_\theta w\frac{\cos\theta}{r}\Big)\Big]\\
=&\varphi\big(\partial_r wA-\frac{\partial_\theta w}{r}L\big) \\
=&C^2r^{2\lambda-2}\big[(\lambda-1)HA-H'L\big].
\end{split}
\end{equation}
Combing (\ref{weqn}) and (\ref{wc}), we have
 \begin{equation*}\label{eqndis}
  Cr^{\lambda+1}\big[(\lambda-1)HA-H'L\big]=H''+(\lambda-1)^2H.
 \end{equation*}
Since $u$ is differentiable, then $\lambda\geq 0$ and the above equation is equivalent to
\begin{numcases}{}
 (\lambda-1)HA-H'L=0, \label{HLeqna} \\
H''+(\lambda-1)^2H=0. \label{HLeqn}
\end{numcases}
We keep in mind that
\begin{equation}\label{kim}
 L'=(\lambda+1)A, ~~~~H=A'+(\lambda+1)L.
\end{equation}
First, we can solve $H$ according to equation (\ref{HLeqn}). If $\lambda=1$, it is easy. If $\lambda\neq1$, (\ref{HLeqn}) has general solution
\begin{equation}\label{11}
  H=A'+(\lambda+1)L=C_1\cos \big((\lambda-1)\theta\big)+C_2\sin \big((\lambda-1)\theta\big),~~~~~\lambda\neq1.
\end{equation}
Substituting (\ref{11}) into equation (\ref{HLeqna}), we have
\begin{equation}\label{12}
  \Big[C_1\cos \big((\lambda-1)\theta\big)+C_2\sin \big((\lambda-1)\theta\big)\Big]A-
  \Big[C_2\cos \big((\lambda-1)\theta\big)-C_1\sin \big((\lambda-1)\theta\big)\Big]L=0.
\end{equation}
Differentiate both sides of (\ref{11}) and we obtain that
$$A''+(\lambda+1)L'=(\lambda-1)\Big[C_2\cos \big((\lambda-1)\theta\big)-C_1\sin \big((\lambda-1)\theta\big)\Big].$$
This and (\ref{kim}) yield that
\begin{equation*}\label{13}
  A''+(\lambda+1)^2A=(\lambda-1)\Big[C_2\cos \big((\lambda-1)\theta\big)-C_1\sin \big((\lambda-1)\theta\big)\Big],~~~~~\lambda\neq1.
\end{equation*}
This equation has general solution
\begin{equation}\label{15a}
\begin{split}
 A=& C_3\cos \theta+C_4\sin\theta+\frac{\theta}{2}(C_1\cos \theta-C_2\sin\theta)\\
 =& \cos \theta\Big(\frac{C_1\theta}{2}+C_3\Big)+\sin \theta\Big(-\frac{C_2\theta}{2}+C_4\Big),~~~~~~~~~~\lambda=0;\\
A=&C_3\cos\big((\lambda+1)\theta\big)+C_4\sin\big((\lambda+1)\theta\big)\\
&+\frac{\lambda-1}{4\lambda}\Big[C_2\cos \big((\lambda-1)\theta\big)-C_1\sin \big((\lambda-1)\theta\big)\Big],~~~~~\lambda\neq0, 1.
\end{split}
\end{equation}
Next, the classification of solutions is discussed in the following cases.

\textbf{\underline{Case 2.1: $\lambda=1$.}} Equations (\ref{HLeqna}) and (\ref{HLeqn}) become $H'L=0$ and
   $H''=0,$
then $H=a\theta+b$ and $aL=0$.
If $a\neq0$, then $L=0$. By (\ref{kim}), $A=\frac{1}{2}L'=0$, $H=A'+2L=0$, this contradicts that $a\neq0$. Therefore $a=0$ and $H=b$. Namely $A'+2L=b,$
then $A''+2L'=0.$
By (\ref{kim}), $L'=2A$, then
$A''+4A=0.$
This equation has general solution
\begin{equation}\label{Alast}
A=C_1\cos 2\theta+C_2\sin 2\theta.
\end{equation}
Then
\begin{equation}\label{Llast}
 L=\frac{1}{2}(H-A')=\frac{b}{2}+C_1\sin 2\theta-C_2\cos 2\theta.
\end{equation}
Combining (\ref{Alast}) and (\ref{Llast}), we obtain
\begin{eqnarray*}
\left\{
  \begin{array}{ll}
    v_1=C_1\cos\theta+\big(C_2+\frac{b}{2}\big)\sin\theta,  \\
   v_2=\big(C_2-\frac{b}{2}\big)\cos\theta-C_1\sin\theta.
  \end{array}
\right.
\end{eqnarray*}
Then
\begin{equation}\label{case2}
  u=Cr\left(
                              \begin{array}{c}
                                C_1\cos\theta+\big(C_2+\frac{b}{2}\big)\sin\theta \\
                                \big(C_2-\frac{b}{2}\big)\cos\theta-C_1\sin\theta \\
                              \end{array}
                            \right)
=\left(
                              \begin{array}{c}
                                C_1'x+C_2'y \\
                                C_3'x-C_1'y \\
                              \end{array}
                            \right).
\end{equation}

\textbf{\underline{Case 2.2: $\lambda=0$.}} 
As shown in (\ref{15a}), in this case
\begin{equation}\label{15a0}
 A=\cos \theta\Big(\frac{C_1\theta}{2}+C_3\Big)+\sin \theta\Big(-\frac{C_2\theta}{2}+C_4\Big).
\end{equation}
By (\ref{kim}), (\ref{11}) and (\ref{15a0}), we have
\begin{equation}\label{15b}
  L=H-A'=\cos \theta\Big(\frac{C_1+C_2\theta}{2}-C_4\Big)+\sin \theta\Big(\frac{C_1\theta-C_2}{2}+C_3\Big).
\end{equation}
We substitute (\ref{15a0}) and (\ref{15b}) into (\ref{12}) and obtain that
\begin{equation*}
\begin{split}
&\Big[\frac{(C_1^2-C_2^2)\theta-C_1C_2}{2}+C_1C_3+C_2C_4\Big]\cos2\theta \\
-&\Big[C_1C_2\theta-C_1C_4+C_2C_3+\frac{C_1^2-C_2^2}{4}\Big]\sin 2\theta=0.
\end{split}
\end{equation*}
Applying Lemma \ref{lemma1} again, we get
$$\frac{(C_1^2-C_2^2)\theta-C_1C_2}{2}+C_1C_3+C_2C_4=C_1C_2\theta-C_1C_4+C_2C_3+\frac{C_1^2-C_2^2}{4}\equiv0,$$
and thus
\begin{equation}\label{c1c20}
  C_1=C_2=0.
\end{equation}
By (\ref{15a0}), (\ref{15b}) and (\ref{c1c20}), we have
\begin{eqnarray*}
\left\{
  \begin{array}{ll}
    A=C_3\cos\theta+C_4\sin\theta \\
    L=C_3\sin\theta-C_4\cos\theta.
  \end{array}
\right.
\end{eqnarray*}
Then
\beno
    v_1=C_3,\quad
    v_2=C_4
  \eeno
and
\begin{equation}\label{case221}
  u=C\left(
       \begin{array}{c}
         C_3 \\
         C_4\\
       \end{array}
     \right)
     =\left(
       \begin{array}{c}
         C_1' \\
         C_2' \\
       \end{array}
     \right).
\end{equation}

Moreover, for $\lambda\neq0, 1$, as shown in (\ref{15a}) there holds
\begin{equation}\label{15ano01}
A=C_3\cos\big((\lambda+1)\theta\big)+C_4\sin\big((\lambda+1)\theta\big)\\
+\frac{\lambda-1}{4\lambda}\Big[C_2\cos \big((\lambda-1)\theta\big)-C_1\sin \big((\lambda-1)\theta\big)\Big],
\end{equation}
and due to  (\ref{kim}), (\ref{11}) and (\ref{15ano01}) we have
\begin{equation}\label{16b}
\begin{split}
 L=&\frac{1}{\lambda+1}(H-A')\\
 =&C_3\sin\big((\lambda+1)\theta\big)-C_4\cos\big((\lambda+1)\theta\big)\\
 &+\frac{\lambda+1}{4\lambda}\Big[C_1\cos\big((\lambda-1)\theta\big)+C_2\sin\big((\lambda-1)\theta\big)\Big].
\end{split}
\end{equation}
Furthermore, by substituting (\ref{15ano01}) and (\ref{16b}) into (\ref{12}) we obtain
\begin{equation}\label{case222}
  \begin{split}
    &(C_1C_3+C_2C_4)\cos2\theta+(C_1C_4-C_2C_3)\sin2\theta\\
-&\frac{1}{2\lambda}\Big[C_1C_2\cos\big((2\lambda-2)\theta\big)+\frac{C_2^2-C_1^2}{2}\sin\big((2\lambda-2)\theta\big)\Big]=0.
  \end{split}
\end{equation}

\textbf{\underline{Case 2.3: $\lambda=2$.}}
Then (\ref{case222}) becomes
\begin{equation}\label{2222}
\Big(C_1C_3+C_2C_4-\frac{C_1C_2}{4}\Big)\cos2\theta+\Big(C_1C_4-C_2C_3+\frac{C_1^2-C_2^2}{8}\Big)\sin2\theta=0,
\end{equation}
then
\begin{eqnarray}\label{Cstru}
\left\{
  \begin{array}{ll}
    C_1C_3+C_2C_4-\frac{C_1C_2}{4}=0,  \\
   C_1C_4-C_2C_3+\frac{C_1^2-C_2^2}{8}=0.
  \end{array}
\right.
\end{eqnarray}
In this case, we have by (\ref{15ano01}) and (\ref{16b}) that

\begin{eqnarray*}
\left\{
  \begin{array}{ll}
   A=C_3\cos3\theta+C_4\sin3\theta+\frac{1}{8}(C_2\cos\theta-C_1\sin\theta\big),  \\
   L=C_3\sin3\theta-C_4\cos3\theta+\frac{3}{8}(C_1\cos\theta+C_2\sin\theta\big),
  \end{array}
\right.
\end{eqnarray*}
then
\begin{eqnarray}\label{case2221}
\left\{
  \begin{array}{ll}
   v_1=\big(C_3+\frac{C_2}{8}\big)\cos^2\theta+\big(\frac{3C_2}{8}-C_3\big)\sin^2\theta+\big(C_4+\frac{C_1}{8}\big)\sin2\theta,\\
   v_2=\big(C_4-\frac{3C_1}{8}\big)\cos^2\theta-\big(C_4+\frac{C_1}{8}\big)\sin^2\theta-\big(C_3+\frac{C_2}{8}\big)\sin2\theta,
  \end{array}
\right.
\end{eqnarray}
and
\begin{equation*}
  u=Cr^2\left(
       \begin{array}{c}
         v_1 \\
         v_2\\
       \end{array}
     \right)
 =C\left(
       \begin{array}{c}
         \big(C_3+\frac{C_2}{8}\big)x^2+\big(\frac{3C_2}{8}-C_3\big)y^2+\big(2C_4+\frac{C_1}{4}\big)xy\\
         \big(C_4-\frac{3C_1}{8}\big)x^2-\big(C_4+\frac{C_1}{8}\big)y^2-\big(2C_3+\frac{C_2}{4}\big)xy\\
       \end{array}
     \right),
\end{equation*}
with $C_1, C_2, C_3, C_4$ satisfying equation (\ref{Cstru}).
If we replace $\frac{CC_1}{8}$ by $C_1$, $\frac{CC_2}{8}$ by $C_2$, $CC_3$ by $C_3$, $CC_4$ by $C_4$, then
\begin{equation}\label{case2221}
  u=\left(
       \begin{array}{c}
         (C_2+C_3)x^2+(3C_2-C_3)y^2+2(C_1+C_4)xy\\
         (C_4-3C_1)x^2-(C_1+C_4)y^2-2(C_2+C_3)xy\\
       \end{array}
     \right),
\end{equation}
with $C_1, C_2, C_3, C_4$ satisfying
\begin{eqnarray}\label{Cstru-1}
\left\{
  \begin{array}{ll}
    C_1C_3+C_2C_4-2C_1C_2=0,  \\
   C_1C_4-C_2C_3+C_1^2-C_2^2=0.
  \end{array}
\right.
\end{eqnarray}

\textbf{\underline{Case 2.4: $\lambda\neq 0, 1, 2$.}} By (\ref{case222}) we have
\begin{eqnarray*}
\left\{
  \begin{array}{ll}
    C_1C_3+C_2C_4=0, \\
   C_1C_4-C_2C_3=0,\\
   C_1C_2=0, \\
   C_2^2-C_1^2=0,
  \end{array}
\right.
\end{eqnarray*}
namely
\begin{equation}\label{c2.3sc2}
  C_1=C_2=0.
\end{equation}
By (\ref{15ano01}), (\ref{16b}) and (\ref{c2.3sc2}), we have
\begin{eqnarray*}
\left\{
  \begin{array}{ll}
    A=C_3\cos\big((\lambda+1)\theta\big)+C_4\sin\big((\lambda+1)\theta\big), \\
    L=C_3\sin\big((\lambda+1)\theta\big)-C_4\cos\big((\lambda+1)\theta\big),
  \end{array}
\right.
\end{eqnarray*}
then
\begin{eqnarray*}
\left\{
  \begin{array}{ll}
    v_1=C_3\cos(\lambda\theta)+C_4\sin(\lambda\theta),  \\
    v_2=C_4\cos(\lambda\theta)-C_3\sin(\lambda\theta),
  \end{array}
\right.
\end{eqnarray*}
and
\begin{equation}\label{case2222}
  u=Cr^\lambda\left(
       \begin{array}{c}
         C_3\cos(\lambda\theta)+C_4\sin(\lambda\theta) \\
         C_4\cos(\lambda\theta)-C_3\sin(\lambda\theta)\\
       \end{array}
     \right)
 =r^\lambda\left(
       \begin{array}{c}
         C_1'\cos(\lambda\theta)+C_2'\sin(\lambda\theta) \\
         C_2'\cos(\lambda\theta)-C_1'\sin(\lambda\theta) \\
       \end{array}
     \right),
\end{equation}
where $\lambda\neq 0, 1, 2$.\\

Moreover, if $\Omega=\mathbb{R}^2$, it's necessary that $v_i(0)=v_i(2\pi), i=1, 2,$ namely
\begin{eqnarray*}
\left\{
  \begin{array}{ll}
    C_1'[1-\cos(2\lambda\pi)]-C_2'\sin(2\lambda\pi)=0,  \\
    C_2'[1-\cos(2\lambda\pi)]+C_1'\sin(2\lambda\pi)=0.
  \end{array}
\right.
\end{eqnarray*}
Notice that the above equations are linear equations with respect to $1-\cos(2\lambda\pi)$ and $\sin(2\lambda\pi)$, and the determinant
\begin{equation*}\label{dercoef}
\left|\begin{array}{cc}
    C_1' &    -C_2'   \\
    C_2' &    C_1'   \\
\end{array}\right|
=C_1'^2+C_2'^2\neq0,
\end{equation*}
otherwise $u=0$, which is included in the former case (\ref{case221}).
By Cramer's Rule,
$$1-\cos(2\lambda\pi)=\sin(2\lambda\pi)=0,$$
thus $\lambda\in\mathbb{N}$. Recall that $\lambda\neq 0, 1, 2$, then $\lambda\geq3$ and $\lambda\in\mathbb{N}$.

{\bf \underline{Step 3: The pressure expressions.}}
Finally, gathering (\ref{case1}), (\ref{case2}), (\ref{case221}), (\ref{case2221}) and (\ref{case2222}), we obtained that the solution $u$ has five types of forms as in Theorem \ref{mainthm1}.
%
%
%
%
Next we substitute solutions of these types into equations (\ref{SNS}) respectively. All these solutions satisfy
$$\mbox{div}~u=0,$$
then it's left to find the suitable pressure.

In type (i), it is easy to derive the solution
\begin{equation*}
  u=\left(
       \begin{array}{c}
         C_1 \\
         C_2\\
       \end{array}
     \right),~~~
     p=C_3.
\end{equation*}

In type (ii), direct computations gives that
\begin{equation*}
  u=\left(
           \begin{array}{c}
           C_1x+C_2y \\
           C_3x-C_1y \\
           \end{array}
           \right),~~~
 p=-\frac{1}{2}(C_1^2+C_2C_3) (x^2+y^2)+C_4
\end{equation*}
is the solution.

In type (iii), by direct computations, we have that
\begin{equation*}
  \begin{split}
    &\partial_1p-8C_2+2(C_3^2+C_4^2-2C_1C_4+2C_2C_3+C_2^2-3C_1^2)x^3\\
    &~~~~+2(C_3^2+C_4^2+2C_1C_4-2C_2C_3+C_1^2-3C_2^2)xy^2\\
    &~~~~+8(C_1C_3+C_2C_4-2C_1C_2)x^2y=0,\\
   &\partial_2p+8C_1+2(C_3^2+C_4^2+2C_1C_4-2C_2C_3+C_1^2-3C_2^2)y^3\\
    &~~~~+2(C_3^2+C_4^2-2C_1C_4+2C_2C_3+C_2^2-3C_1^2)x^2y\\
    &~~~~+8(C_1C_3+C_2C_4-2C_1C_2)xy^2=0.\\
   \end{split}
\end{equation*}
We apply (\ref{Cstru-1}) to the above equations, then
\begin{equation*}
  \begin{split}
    &\partial_1p-8C_2+2(C_3^2+C_4^2-C_1^2-C_2^2)(x^3+xy^2)=0,\\
   &\partial_2p+8C_1+2(C_3^2+C_4^2-C_1^2-C_2^2)(y^3+x^2y)=0.
   \end{split}
\end{equation*}
Therefore
$$p=\frac{1}{2}(C_1^2+C_2^2-C_3^2-C_4^2)(x^2+y^2)^2+8C_2x-8C_1y+C_5,$$
with $C_1, C_2, C_3, C_4$ satisfying equation (\ref{Cstru-1});

In type (iv), note that $C_1=C_2=0$ in (\ref{c2.3sc2}), which implies $H=0$ and $w\equiv0$ due to (\ref{11}) and (\ref{HAL}). Using
\beno
(u_2,-u_1)^Tw=u\cdot\nabla u-\nabla (\frac{|u|^2}{2}),
\eeno
the pressure can be expressed by $-\frac12 |u|^2+C$.
Then
$$p=-\frac{1}{2}(C_1^2+C_2^2)r^{2\lambda}+C_3.$$

In type (v), $\Omega\neq\mathbb{R}^2$ and
\beno
u=(C_1r+C_2r\ln r)\left(
                              \begin{array}{c}
                                -\sin\theta \\
                                \cos\theta \\
                              \end{array}
                            \right).
\eeno
Direct computations give that
 \beno
 w=-(2C_2\ln r+C_2+2C_1),
 \eeno
and
\beno
u\cdot\nabla u=(C_1+C_2\ln r)\partial_\theta u.
\eeno
Then
\beno
\nabla p&=&\triangle u-u\cdot\nabla u=\nabla^\top w-u\cdot\nabla u\\
&=&\frac{2C_2}{r}\left(
                              \begin{array}{c}
                                -\sin\theta \\
                                \cos\theta \\
                              \end{array}
                            \right)
                            +(C_1+C_2\ln r)^2r\left(
                              \begin{array}{c}
                               \cos\theta  \\
                                 \sin\theta\\
                              \end{array}
                            \right),
\eeno
which implies
\beno
p=\frac12 r^2\Big[C_2^2\ln^2r+(2C_1C_2-C_2^2)\ln r+C_1^2-C_1C_2+\frac{1}{2}C_2^2\Big]+2C_2\theta+C_3
\eeno
by integration by parts.

Thus the proof is complete.
\end{proof}

\section{Proof of Theorem \ref{mainthm3}}

In this section, we prove Theorem \ref{mainthm3}. Our strategy is to reduce the problem here to the one stated in Theorem \ref{mainthm1}.

\begin{proof}[Proof of Theorem \ref{mainthm3}]
In this part, $u$ has the form
$$u=\left(
                              \begin{array}{c}
                                v_1(\theta)\varphi_1(r) \\
                                v_2(\theta)\varphi_2(r) \\
                              \end{array}
                            \right).
$$
In the following, we will only consider $v_1\not \equiv0$, $v_2\not \equiv0$, $\varphi_1\not \equiv0$ and $\varphi_2\not \equiv0$, for the cases $v_1\equiv0$, $v_2\equiv0$,
$\varphi_1\equiv0$ or $\varphi_2\equiv0$ can be reduced to the cases of Theorem \ref{mainthm1}.

Direct computations show that
\begin{equation*}
  \begin{split}
 \mbox{div}~u=&\Big(\cos\theta\partial_r-\frac{\sin\theta}{r} \partial_\theta\Big)(v_1\varphi_1)
  + \Big(\sin\theta\partial_r+\frac{\cos\theta}{r} \partial_\theta\Big)(v_2\varphi_2) \\
=&\cos\theta v_1\varphi_1'+\sin\theta v_2\varphi_2'+\cos\theta v_2'\frac{\varphi_2}{r}-\sin\theta v_1'\frac{\varphi_1}{r},
  \end{split}
\end{equation*}
and
\begin{align}
w&=\partial_2u_1-\partial_1u_2   \nonumber\\
&=\Big(\sin\theta\partial_r+\frac{\cos\theta}{r} \partial_\theta\Big)(v_1\varphi_1)- \Big(\cos\theta\partial_r-\frac{\sin\theta}{r} \partial_\theta\Big)(v_2\varphi_2) \nonumber\\
&=\sin\theta v_1\varphi_1'-\cos\theta v_2\varphi_2'+\cos\theta v_1'\frac{\varphi_1}{r}+\sin\theta v_2'\frac{\varphi_2}{r}. \label{eq:div g-1}
\end{align}
Moreover,
$\mbox{div}~u=0$ implies that
\begin{equation}\label{eq:div}
  \cos\theta v_1\varphi_1'+\sin\theta v_2\varphi_2'+\cos\theta v_2'\frac{\varphi_2}{r}-\sin\theta v_1'\frac{\varphi_1}{r}=0.
\end{equation}
Next we discuss the problem according to whether $\cos\theta v_1$ and $\sin\theta v_2$ are linearly dependent.

\textbf{\underline{Step 1: $\cos\theta v_1$ and $\sin\theta v_2$ are linearly dependent.}}
there exists $\lambda\neq0$, such that
$$\cos\theta v_1=\lambda\sin\theta v_2,$$
since $v_1\not\equiv0$ and $v_2\not\equiv0$. Then
\begin{equation}\label{gsvc1-1}
v_1=\lambda\tan\theta v_2.
\end{equation}
Substituting (\ref{gsvc1-1}) into (\ref{eq:div}), we obtain that
%
\begin{equation}\label{eq:div1}
 \sin\theta v_2(\lambda\varphi_1'+\varphi_2')+\cos\theta v_2'\frac{\varphi_2}{r}
-\lambda\sin\theta(\sec^2\theta v_2+\tan\theta v_2')\frac{\varphi_1}{r}=0.
\end{equation}
Since $v_2\not\equiv0$, there exist an interval $K$, such that $v_2, \sin\theta, \cos\theta\neq0$ when $\theta\in K.$
Then we deduce from (\ref{eq:div1}) that
$$
\lambda\varphi_1'+\varphi_2'+\frac{v_2'}{\tan\theta v_2}\frac{\varphi_2}{r}
-\lambda \Big(\sec^2\theta+\frac{\tan\theta v_2'}{v_2}\Big)\frac{\varphi_1}{r}=0,~~~~~\theta\in K.
$$
Let $M(\theta)=\frac{v_2'}{\tan\theta v_2}$, $N(\theta)=\sec^2\theta+M(\theta)\tan^2\theta$,
then we can rewrite the above formula as
\begin{equation}\label{gsvc1-1a}
\lambda\varphi_1'+\varphi_2'+M(\theta)\frac{\varphi_2}{r}
-\lambda N(\theta)\frac{\varphi_1}{r}=0,~~~~~\theta\in K.
\end{equation}

\textbf{Case 1.1}: If $M(\theta)$ is not a constant in $K$, namely there exists $\theta_1, \theta_2\in K$, such that
$$M(\theta_1)\neq M(\theta_2),$$
then
$$\lambda\varphi_1'+\varphi_2'+M(\theta_1)\frac{\varphi_2}{r}-\lambda N(\theta_1)\frac{\varphi_1}{r}=0,$$
$$\lambda\varphi_1'+\varphi_2'+M(\theta_2)\frac{\varphi_2}{r}-\lambda N(\theta_2)\frac{\varphi_2}{r}=0.$$
In the above two equations, the first one minus the second gives that
$$\varphi_2=\lambda\frac{N(\theta_1)-N(\theta_2)}{M(\theta_1)-M(\theta_2)}\varphi_1=:C\varphi_1,$$
then $u=\left(
            \begin{array}{c}
           v_1\varphi_1 \\
           Cv_2 \varphi_1\\
            \end{array}
            \right)
=\left(
            \begin{array}{c}
            v_1\\\
           Cv_2\\
            \end{array}
            \right)
\varphi_1,$
reducing to the cases of Theorem \ref{mainthm1}.

\textbf{Case 1.2}: If $N(\theta)$ is not a constant in $K$, this case is similar with the above case of \textbf{Case 1.1}.

\textbf{Case 1.3}: If both $M(\theta)$ and $N(\theta)$ are constants in $K$, it is easy to see that
$M(\theta)=-1, N(\theta)=1,$
and (\ref{gsvc1-1a}) becomes
$$(\lambda\varphi_1+\varphi_2)'=\frac{\lambda\varphi_1+\varphi_2}{r},$$
then
$\lambda\varphi_1+\varphi_2=C_1r,$
and
\begin{equation}\label{phy1phy2.rela}
\varphi_2=C_1r-\lambda\varphi_1.
\end{equation}
Now we substitute (\ref{phy1phy2.rela}) to (\ref{eq:div1}) and obtain that
$$(\sin\theta v_2+\cos\theta v_2')\Big(C_1-\lambda\sec^2\theta\frac{\varphi_1}{r}\Big)=0.$$
Since $\lambda\neq0$ and $\varphi_1\not\equiv0$, then we  have
$\sin\theta v_2+\cos\theta v_2'=0.$
Applying Lemma \ref{lemma2new}, there holds
$v_2=C_2\cos\theta$
with $C_2\neq0$. By (\ref{gsvc1-1}), we get
$v_1=\lambda C_2\sin\theta.$
We substitute
$$\left\{
  \begin{array}{ll}
    v_1=\lambda C_2\sin\theta \\
    v_2=C_2\cos\theta \\
\varphi_2=C_1r-\lambda\varphi_1
  \end{array}
\right.$$
into (\ref{eq:div g-1}) and obtain that
$$w=\lambda C_2\big(\varphi_1'+\frac{\varphi_1}{r}\big)-C_1C_2,$$
then
\begin{equation}\label{gsvc1-w1}
\begin{split}
 \Delta w=&\big(\partial_r^2+\frac{1}{r}\partial_r+\frac{1}{r^2}\partial_\theta^2\big)w=\lambda C_2\big(\partial_r^2+\frac{1}{r}\partial_r\big)\big(\varphi_1'+\frac{\varphi_1}{r}\big)\\
  =&\lambda C_2\big(\varphi_1'''+\frac{2\varphi_1''}{r}-\frac{\varphi_1'}{r^2}+\frac{\varphi_1}{r^3}\big);\\
  \partial_1w=&\lambda C_2\big(\varphi_1'+\frac{\varphi_1}{r}\big)'\cos \theta=\lambda C_2\cos\theta\big(\varphi_1''+\frac{\varphi_1'}{r}-\frac{\varphi_1}{r^2}\big);\\
  \partial_2w=&\lambda C_2\sin\theta\big(\varphi_1''+\frac{\varphi_1'}{r}-\frac{\varphi_1}{r^2}\big);\\
  u\cdot \nabla w=&v_1\varphi_1\partial_1w+v_2\varphi_2\partial_2w=\lambda C_1C_2^2\sin\theta\cos\theta r\big(\varphi_1''+\frac{\varphi_1'}{r}-\frac{\varphi_1}{r^2}\big).
\end{split}
\end{equation}
Combine (\ref{weqn}) and (\ref{gsvc1-w1}), and notice that $C_2\neq0$, $\lambda\neq0$, then we obtain
$$\varphi_1'''+\frac{2\varphi_1''}{r}-\frac{\varphi_1'}{r^2}+\frac{\varphi_1}{r^3}=C_1C_2\sin\theta\cos\theta r\big(\varphi_1''+\frac{\varphi_1'}{r}-\frac{\varphi_1}{r^2}\big),$$
which implies that
\begin{equation}\label{phy1.eqn}
  \left\{
    \begin{array}{ll}
      C_1\big(\varphi_1''+\frac{\varphi_1'}{r}-\frac{\varphi_1}{r^2}\big)=0,\\
      \varphi_1'''+\frac{2\varphi_1''}{r}-\frac{\varphi_1'}{r^2}+\frac{\varphi_1}{r^3}=0.
    \end{array}
  \right.
\end{equation}
Notice that the second equation of (\ref{phy1.eqn}) is namely (\ref{phieqn}), whose solutions are
$\varphi_1=C_3r+ C_4r\ln r.$
These solutions verify the first equation of (\ref{phy1.eqn}) if and only if $C_4=0$, then $\varphi_1=C_3r$ are the solutions of (\ref{phy1.eqn}).
Therefore,
\begin{equation*}
  \left\{
    \begin{array}{ll}
    \varphi_1=C_3r, \\
    \varphi_2=(C_1-\lambda C_3)r.
  \end{array}
  \right.
\end{equation*}
Finally we have
$$u=C_2\left(
            \begin{array}{c}
           \lambda C_3 r\sin\theta \\
           (C_1-\lambda C_3)r\cos\theta\\
            \end{array}
            \right)
=\left(
            \begin{array}{c}
            C_4y\\
           C_5x\\
            \end{array}
            \right),
~~~p=-\frac{1}{2}C_4C_5(x^2+y^2)+C_6,$$
included in the type (ii) as shown in Theorem \ref{mainthm1}.

\textbf{\underline{Step 2:  $\cos\theta v_1$ and $\sin\theta v_2$ are linearly independent.}} Then there exist $\theta_1\neq\theta_2$
such that the determinant
\begin{equation*}\label{dercoef}
\mathbf{D}_1:= \left|\begin{array}{cc}
    \cos\theta_1 v_1(\theta_1) &    \sin\theta_1 v_2(\theta_1)   \\
    \cos\theta_2 v_1(\theta_2) &    \sin\theta_2 v_2(\theta_2)   \\
\end{array}\right|\\
\neq0.
\end{equation*}
We take $\theta=\theta_1$ and $\theta=\theta_2$ respectively in (\ref{eq:div}), then we obtain equations
\begin{equation}\label{keyidea}
\left\{
    \begin{array}{ll}
      \cos\theta_1 v_1(\theta_1)\varphi_1'+\sin\theta_1 v_2(\theta_1)\varphi_2'=\sin\theta_1 v_1'(\theta_1)\frac{\varphi_1}{r}-\cos\theta_1 v_2'(\theta_1)\frac{\varphi_2}{r}, \\
      \cos\theta_2 v_1(\theta_2)\varphi_1'+\sin\theta_2 v_2(\theta_2)\varphi_2'=\sin\theta_2 v_1'(\theta_2)\frac{\varphi_1}{r}-\cos\theta_2 v_2'(\theta_2)\frac{\varphi_2}{r}.
    \end{array}
  \right.
\end{equation}
Notice that the determinant of the coefficients of (\ref{keyidea}) is exactly $\mathbf{D}_1$. Since $\mathbf{D}_1\neq0$, by Cramer's Rule we must have
\begin{equation}\label{phy1phy2}
\left\{
    \begin{array}{ll}
      \varphi_1'=a\frac{\varphi_1}{r}+b\frac{\varphi_2}{r}, \\
      \varphi_2'=c\frac{\varphi_1}{r}+d\frac{\varphi_2}{r}.
    \end{array}
  \right.
\end{equation}
We substitute (\ref{phy1phy2}) into (\ref{eq:div}) and obtain that
$$(a\cos\theta v_1+c\sin\theta v_2-\sin\theta v_1')\varphi_1+(b\cos\theta v_1+d\sin\theta v_2+\cos\theta v_2')\varphi_2=0.$$
Since $\varphi_2\not\equiv0$, by Lemma \ref{lemma1} we get
either
\begin{equation}\label{case2.1a}
\left\{
  \begin{array}{ll}
   \varphi_1=\lambda\varphi_2 \\
    b\cos\theta v_1+d\sin\theta v_2+\cos\theta v_2'=-\lambda(a\cos\theta v_1+c\sin\theta v_2-\sin\theta v_1')
  \end{array}
\right.
\end{equation}
or
\begin{equation}\label{v1v2cond}
\left\{
  \begin{array}{ll}
    a\cos\theta v_1+c\sin\theta v_2-\sin\theta v_1'=0, \\
    b\cos\theta v_1+d\sin\theta v_2+\cos\theta v_2'=0.
  \end{array}
\right.
\end{equation}
In the first case (\ref{case2.1a}), since $\varphi_1=\lambda\varphi_2$,
$u=\left(
            \begin{array}{c}
           \lambda v_1\varphi_2 \\
           v_2\varphi_2 \\
            \end{array}
            \right)
=\left(
            \begin{array}{c}
          \lambda v_1 \\
            v_2\\
            \end{array}
            \right)
\varphi_2$
can be reduced to the cases of Theorem \ref{mainthm1}.

Next we focus on the second case of (\ref{v1v2cond}), which implies that
\begin{equation}\label{v1v2deri}
\left\{
  \begin{array}{ll}
  v_1'=a\cot\theta v_1+c v_2, \\
  v_2'=-(bv_1+d\tan\theta v_2).
  \end{array}
\right.
\end{equation}
Substituting (\ref{phy1phy2}) and (\ref{v1v2deri}) into (\ref{eq:div g-1}), we obtain that
\begin{equation}\label{simplyw}
  w=\frac{av_1}{\sin \theta}\frac{\varphi_1}{r}-\frac{dv_2}{\cos \theta}\frac{\varphi_2}{r}.
\end{equation}
$w$ satisfy the equation $\triangle w=u\cdot \nabla w$. First, we compute $u\cdot \nabla w$.
\begin{align}
 u\cdot \nabla w&=v_1\varphi_1\partial_1w+v_2\varphi_2\partial_2w \nonumber\\
 &=v_1\varphi_1\big(\cos\theta\partial_r-\frac{\sin\theta}{r} \partial_\theta\big)w+v_2\varphi_2\big(\sin\theta\partial_r+\frac{\cos\theta}{r} \partial_\theta\big)w   \nonumber\\
&=(\cos\theta v_1\varphi_1+\sin\theta v_2\varphi_2)\partial_r w+\frac{1}{r}(-\sin\theta v_1\varphi_1+\cos\theta v_2\varphi_2)\partial_\theta w.    \label{udotw}
\end{align}
According to (\ref{simplyw}) and (\ref{phy1phy2}),
\begin{align}
\partial_r w=&\frac{av_1}{\sin \theta}\Big(\frac{\varphi_1}{r}\Big)'
   -\frac{dv_2}{\cos \theta}\Big(\frac{\varphi_2}{r}\Big)'          \nonumber\\
=&\frac{av_1}{\sin \theta}r^{-2}\big[(a-1)\varphi_1+b\varphi_2\big]
-\frac{dv_2}{\cos \theta}r^{-2}\big[c\varphi_1+(d-1)\varphi_2\big]  \nonumber\\
=&\Big[a(a-1)\frac{v_1}{\sin\theta}-cd\frac{v_2}{\cos \theta}\Big]r^{-2}\varphi_1
+\Big[ab\frac{v_1}{\sin\theta}-d(d-1)\frac{v_2}{\cos \theta}\Big]r^{-2}\varphi_2. \label{partialr-w}
\end{align}
According to (\ref{simplyw}) and (\ref{v1v2deri}),
\begin{align}
\partial_\theta w
=&\frac{a\varphi_1}{r}\Big(\frac{v_1}{\sin\theta}\Big)'
     -\frac{d\varphi_2}{r}\Big(\frac{v_2}{\cos\theta}\Big)'              \nonumber \\
=&\frac{a}{\sin\theta}\big[(a-1)\cot\theta v_1+cv_2\big]r^{-1}\varphi_1
+\frac{d}{\cos\theta}\big[bv_1+(d-1)\tan\theta v_2\big]r^{-1}\varphi_2.  \label{partialth-w}
\end{align}
Substituting (\ref{partialr-w}) and (\ref{partialth-w}) into (\ref{udotw}), we obtain that
\begin{equation}\label{udotw-1}
u\cdot \nabla w=r^{-2}\big[-c(a+d)v_1v_2\varphi_1^2+b(a+d)v_1v_2\varphi_2^2
+F_3\varphi_1\varphi_2\big],
\end{equation}
where
\begin{equation}\label{F3}
F_3=(a\cot\theta-d\tan\theta)(bv_1^2+cv_2^2)+\Big(\frac{a^2-a}{\sin^2\theta}
-\frac{d^2-d}{\cos^2\theta}\Big)v_1v_2.
\end{equation}
Next we compute $\Delta w$.
\begin{equation}\label{Deltaw-1}
  \begin{split}
 \Delta w=&\big(\partial_r^2+r^{-1}\partial_r+r^{-2}\partial_\theta^2\big)w.\\
 \end{split}
\end{equation}
By (\ref{partialr-w}) and (\ref{phy1phy2}),
\begin{align}
\partial_r^2 w
=&\Big\{\big[a(a-1)(a-2)+abc\big]\frac{v_1}{\sin\theta}-cd(a+d-3)\frac{v_2}{\cos\theta} \Big\}r^{-3}\varphi_1\nonumber\\
&+\Big\{ab(a+d-3)\frac{v_1}{\sin\theta}-\big[d(d-1)(d-2)
+bcd\big]\frac{v_2}{\cos\theta} \Big\}r^{-3}\varphi_2\label{partial2r-w}
\end{align}
By (\ref{partialth-w}) and (\ref{v1v2deri}),
\begin{equation}\label{partial2th-w}
\begin{split}
\partial_\theta^2w
=\Big\{\big[a(a-1)(a-2)\cot^2\theta-a(bc+a-1)\big]\frac{v_1}{\sin\theta}
+ac\big[(a-2)\cot^2\theta-d\big]\frac{v_2}{\cos\theta}\Big\}r^{-1}\varphi_1\\
-\Big\{\big[d(d-1)(d-2)\tan^2\theta-d(bc+d-1)\big]\frac{v_2}{\cos\theta}
+bd\big[(d-2)\tan^2\theta-a\big]\frac{v_1}{\sin\theta}\Big\}r^{-1}\varphi_2\\
\end{split}
\end{equation}
Substituting (\ref{partialr-w}), (\ref{partial2r-w}) and (\ref{partial2th-w}) into (\ref{Deltaw-1}), we obtain that
\begin{equation}\label{Deltaw-2}
\Delta w=r^{-3}(F_1\varphi_1-F_2\varphi_2),
\end{equation}
where
\begin{equation}\label{F1F2}
  \begin{split}
F_1=&a(a-1)(a-2)\frac{v_1}{\sin^3\theta}+c\big[a(a-2)\cot^2\theta-d(d+2a-2)\big]\frac{v_2}{\cos\theta},\\
F_2=&d(d-1)(d-2)\frac{v_2}{\cos^3\theta}+b\big[d(d-2)\tan^2\theta-a(a+2d-2)\big]\frac{v_1}{\sin\theta}.
  \end{split}
\end{equation}
$\triangle w=u\cdot \nabla w$, (\ref{udotw-1}) and (\ref{Deltaw-2}) yield that
\begin{equation}\label{criti.eqn}
F_1\varphi_1-F_2\varphi_2=(a+d)v_1v_2r\big(b\varphi_2^2-c\varphi_1^2\big)+F_3r\varphi_1\varphi_2,
\end{equation}
where $F_1, F_2, F_3$ are given by (\ref{F3}) and (\ref{F1F2}), and $v_1, v_2$ satisfy (\ref{v1v2cond}).

Since $\varphi_1, \varphi_2$ satisfy equations (\ref{phy1phy2}),  applying Lemma \ref{lemma3} for them,
 we obtain their expressions as shown in Lemma \ref{lemma3}.
If $\varphi_1$ and $\varphi_2$ are linearly dependent, one can deduce this problem to the previous one,
hence it suffices to consider that they are linearly independent.

\textbf{\underline{Case (1): $b=0$, $d=a$.}} At this time,
\begin{equation*}
\left\{
    \begin{array}{ll}
\varphi_1=C_1r^a, \\
\varphi_2=(cC_1\ln r+C_2)r^a.
    \end{array}
  \right.
\end{equation*}
Since $\varphi_1$ and $\varphi_2$ are linearly independent, then $cC_1\neq0$. $\varphi_1, \varphi_2\in C^0([0,\infty))$, then $a>0$.

Substitute $b=0$, $d=a$ and the expressions of $\varphi_1$ and $\varphi_2$ into (\ref{criti.eqn}),
\begin{equation*}
  (C_1F_1-C_2F_2)-cC_1F_2\ln r=r^{a+1}\big[cC_1^2F_3\ln r -2acC_1^2 v_1v_2+C_1C_2F_3\big],
\end{equation*}
which implies
\begin{equation*}\label{case1crieqn1}
\left\{
  \begin{array}{ll}
C_1F_1-C_2F_2=0;\\
cC_1F_2=0;\\
cC_1^2F_3=0;\\
-2acC_1^2v_1v_2+C_1C_2F_3=0.
  \end{array}
\right.
\end{equation*}
Since $cC_1\neq0$ and $a>0$, then
$F_1=F_2=F_3=v_1v_2\equiv0.$
Applying Lemma \ref{lemma4}, we have
$v_2\equiv0,$
which contradicts our assumption that $v_2\not\equiv0$.
Therefore this case doesn't exist.

\textbf{\underline{Case (2): $b=0$, $d\neq a$.}} We have
\begin{equation*}
\left\{
    \begin{array}{ll}
      \varphi_1=C_1r^a, \\
      \varphi_2=\frac{c}{a-d}C_1r^a+C_2r^d.
    \end{array}
  \right.
\end{equation*}
Since $\varphi_1$ and $\varphi_2$ are linearly independent, then $C_1C_2\neq0$. $\varphi_1, \varphi_2\in C^0$, then $a\geq 0$ and  $d\geq0$.
$d\neq a$ implies that $a+d>0$.

In this case, equation (\ref{criti.eqn}) becomes
\begin{equation}\label{caseIcrieqn}
C_1\Big(F_1-\frac{c F_2}{a-d}\Big)r^a-C_2F_2r^d=cC_1^2\Big[\frac{F_3}{a-d}-(a+d)v_1v_2\Big]r^{2a+1}
+C_1C_2F_3r^{a+d+1}.
\end{equation}
$r^a, r^d, r^{2a+1}, r^{a+d+1}$ appear in the above formula. Obviously, $r^a$ is different from the other three,
so is $r^{a+d+1}$. Then the coefficients of $r^a$ and $r^{a+d+1}$ must be 0, which give that
\begin{numcases}{}
F_1=\frac{c}{a-d}F_2;    \label{case2.eqn1} \\
F_3=0;                   \label{case2.eqn2} \\
C_2F_2r^d=c(a+d)C_1^2v_1v_2r^{2a+1}.    \label{case2.eqn3}
\end{numcases}
This situation will be divided into two subcases for further discussion.

\textbf{\underline{Case (2.1):  $c=0$.}} The above equations are reduced to
$$F_1=F_2=F_3=0,$$
namely
\begin{equation}\label{Case2.2.2.I.1}
\left\{
  \begin{array}{ll}
a(a-1)(a-2)\frac{v_1}{\sin^3\theta}=0;\\
d(d-1)(d-2)\frac{v_2}{\cos^3\theta}=0;\\
\Big(\frac{a^2-a}{\sin^2\theta}-\frac{d^2-d}{\cos^2\theta}\Big)v_1v_2 =0.
  \end{array}
\right.
\end{equation}
Since $v_1\not\equiv0$ and $v_2\not\equiv0$, then the first and second equation of (\ref{Case2.2.2.I.1}) give that
$$a=0,1,2;~~~~~d=0,1,2.$$
Notice that $c=0$, then the first equation of (\ref{v1v2cond}) is reduced to
\begin{equation*}\label{v1v2cond1}
    a\cos\theta v_1-\sin\theta v_1'=0.
\end{equation*}
Since $a=0,1,2$ and $v_1\in C^2$, we use a argument similar to the one in Lemma \ref{lemma2new}, then we obtain that
\begin{equation}\label{v1v2cond2}
    v_1=C_3\sin^a\theta,~~~~~C_3\neq0,
\end{equation}
Similarly, since $b=0$, the second equation of (\ref{v1v2cond}) is reduced to
\begin{equation*}
    d\sin\theta v_2+\cos\theta v_2'=0,
\end{equation*}
Since $d=0,1,2$ and $v_2\in C^2$, we apply Lemma \ref{lemma2new} and obtain that
\begin{equation}\label{v1v2cond2a'}
    v_2=C_4\cos^d\theta,~~~~~C_4\neq0.
\end{equation}
Having (\ref{v1v2cond2}) and (\ref{v1v2cond2a'}), one can reduce the third equation of (\ref{Case2.2.2.I.1}) to
\begin{equation*}
\left\{
  \begin{array}{ll}
a^2-a=0;\\
d^2-d=0.\\
  \end{array}
\right.
\end{equation*}
Since $d\neq a$, the above equations have solutions
\begin{equation*}
  \left\{
     \begin{array}{ll}
a=1  \\
d=0~;
     \end{array}
   \right.~~~~
  \left\{
     \begin{array}{ll}
a=0 \\
d=1~.
     \end{array}
   \right.
\end{equation*}

If $a=1, d=0$, then
$$\varphi_1=C_1r,~~~\varphi_2=C_2,~~~v_1=C_3\sin\theta,~~~v_2=C_4,$$
and
\begin{equation}\label{extrasolu1}
u=\left(
       \begin{array}{c}
         C_1C_3r\sin\theta \\
         C_2C_4\\
       \end{array}
     \right)
=\left(
       \begin{array}{c}
         C_5y \\
         C_6\\
       \end{array}
     \right),~~~
     p=-C_5C_6x+C_7.
\end{equation}
If $a=0, d=1$, then
$$\varphi_1=C_1,~~~\varphi_2=C_2r,~~~v_1=C_3,~~~v_2=C_4\cos\theta,$$
and
\begin{equation}\label{extrasolu2}
u=\left(
       \begin{array}{c}
         C_1C_3 \\
         C_2C_4r\cos\theta\\
       \end{array}
     \right)
=\left(
       \begin{array}{c}
         C_5 \\
         C_6x\\
       \end{array}
     \right),~~~
     p=-C_5C_6y+C_7.
\end{equation}

\textbf{\underline{Case (2.2):  $c\neq0$.}} Consider equation (\ref{case2.eqn3}).\\
If $d\neq2a+1$, $$C_2F_2=c(a+d)C_1^2v_1v_2\equiv0,$$
namely
$$F_2\equiv0,~~~~~v_1v_2\equiv0.$$
According to Lemma \ref{lemma4},
$v_2\equiv0,$
which contradicts that $v_2\not\equiv0$. Therefore
$$d=2a+1.$$

With (\ref{F1F2}) and $b=0$, we rewrite (\ref{case2.eqn1}) as
\begin{equation}\label{v1v2rela1a}
  \begin{split}
a(a-1)(a-2)v_1=&\frac{c\sin\theta}{\cos^3\theta}\Big[\frac{d(d-1)(d-2)}{a-d}\sin^2\theta-a(a-2)\cos^4\theta\\
&~~~~~~~~~~~~~~~~~~~+d(d+2a-2)\sin^2\theta\cos^2\theta\Big]v_2.
  \end{split}
\end{equation}

If $a(a-1)(a-2)=0$, then
$$\Big[\frac{d(d-1)(d-2)}{a-d}\sin^2\theta-a(a-2)\cos^4\theta+d(d+2a-2)\sin^2\theta\cos^2\theta\Big]v_2=0.$$
Since $v_2\not\equiv0$, there holds
\begin{equation*}
  \left\{
    \begin{array}{ll}
d(d-1)(d-2)=0;\\
a(a-2)=0;\\
d(d+2a-2)=0.
    \end{array}
  \right.
\end{equation*}
The above equations have solutions
\begin{equation*}
  \left\{
     \begin{array}{ll}
a=0  \\
d=0~;
     \end{array}
   \right.~~~~
  \left\{
     \begin{array}{ll}
a=0  \\
d=2~;
     \end{array}
   \right.~~~~
  \left\{
     \begin{array}{ll}
a=2 \\
d=0~.
     \end{array}
   \right.
\end{equation*}
These all contradict that $d=2a+1$. Therefore
$$a(a-1)(a-2)\neq0.$$
This and (\ref{v1v2rela1a}) give that
\begin{equation}\label{v1v2rela3a}
  \begin{split}
v_1=\frac{c}{a(a-1)(a-2)}\frac{\sin\theta}{\cos^3\theta}&\Big[\frac{d(d-1)(d-2)}{a-d}\sin^2\theta-a(a-2)\cos^4\theta\\
&~~~~~~+d(d+2a-2)\sin^2\theta\cos^2\theta\Big]v_2.
  \end{split}
\end{equation}
Notice that when $b=0$, equation (\ref{case2.eqn2}) is
$$F_3=c(a\cot\theta-d\tan\theta)v_2^2+\Big(\frac{a^2-a}{\sin^2\theta}
-\frac{d^2-d}{\cos^2\theta}\Big)v_1v_2=0.$$
Since $v_2\not\equiv0$, there exists an interval $K_1$, such that
$$v_2\neq0,~~~~~\theta\in K_1.$$
Then
$$
c(a\cot\theta-d\tan\theta)v_2+\Big(\frac{a^2-a}{\sin^2\theta}
-\frac{d^2-d}{\cos^2\theta}\Big)v_1=0,~~~~~\theta\in K_1.
$$
Substituting (\ref{v1v2rela3a}) into the above equation, we obtain that in $K_1$,
\ben\label{eq:a-d}
&&a(a-1)(a-2)(a\cos^2\theta-d\sin^2\theta)\cos^4\theta+\big[(a^2-a)\cos^2\theta-(d^2-d)\sin^2\theta\big]\nonumber\\
&&~~~~~\times\Big[\frac{d(d-1)(d-2)}{a-d}\sin^2\theta-a(a-2)\cos^4\theta+d(d+2a-2)\sin^2\theta\cos^2\theta\Big]=0.\nonumber\\
  \een
Notice that the left hand side of the above equation is a polynomial with respect to $\cos^2\theta$,
and the constant term of this polynomial is $\frac{d^2(d-1)^2(d-2)}{d-a}$, then we have
$$\frac{d^2(d-1)^2(d-2)}{d-a}=0,$$
and
$d=0,~1,~2.$ Recall that $d=2a+1$, $a\geq 0$ and $a(a-1)(a-2)\neq0$, then
$$d=2,~~~~a=\frac12.$$
Substituting this into (\ref{eq:a-d}), we obtain that $-\frac{11}{16}\cos^2\theta=\sin^2\theta$, which is impossible.
Therefore, Case (2.2) does't exist.

\textbf{\underline{Case (3): $b\neq0$, $\delta>0$.}} Now we have
\begin{equation*}\label{phy1phy2caseII}
\left\{
    \begin{array}{ll}
     \varphi_1=C_1r^m+C_2r^n, \\
     \varphi_2=\frac{m-a}{b}C_1r^m+\frac{n-a}{b}C_2r^n,
    \end{array}
  \right.
\end{equation*}
where $m, n$ are two different real roots of equation (\ref{char.eqn}) and $m>n$.

Since $\varphi_1$ and $\varphi_2$ are linearly independent, then $C_1C_2\neq0$. $\varphi_1, \varphi_2\in C^0$, then $m>n\geq 0.$

By Vieta's theorem,
\begin{equation}\label{wietathm}
\begin{split}
a+d=&m+n>0,\\
ad-bc=&mn.
\end{split}
\end{equation}

In this case, equation (\ref{criti.eqn}) becomes
\begin{equation}\label{caseIIcrieqn}
\begin{split}
&C_1\big[bF_1-(m-a)F_2\big]r^m+C_2\big[bF_1-(n-a)F_2\big]r^n\\
=&C_1^2\Big\{(a+d)\big[(m-a)^2-bc\big]v_1v_2+(m-a)F_3\Big\}r^{2m+1}\\
&+C_2^2\Big\{(a+d)\big[(n-a)^2-bc\big]v_1v_2+(n-a)F_3\Big\}r^{2n+1}\\
&+C_1C_2\Big\{2(a+d)\big[(m-a)(n-a)-bc\big]v_1v_2+(m+n-2a)F_3\Big\}r^{m+n+1}.
\end{split}
\end{equation}
Note that $r^m, r^n, r^{2m+1}, r^{2n+1}, r^{m+n+1}$ appear in the above equation. Since $m>n$,
$$2m+1>m+n+1>2n+1, m>n,$$
the coefficients of $r^{2m+1}$, $r^{m+n+1}$ and $r^n$ must be $0$, which implies that
\begin{equation}\label{caseIIcrieqn1}
\left\{
  \begin{array}{ll}
(a+d)\big[(m-a)^2-bc\big]v_1v_2+(m-a)F_3=0;\\
2(a+d)\big[(m-a)(n-a)-bc\big]v_1v_2+(m+n-2a)F_3=0;\\
bF_1=(n-a)F_2;\\
C_1\big[bF_1-(m-a)F_2\big]r^m=C_2^2\Big\{(a+d)\big[(n-a)^2-bc\big]v_1v_2+(n-a)F_3\Big\}r^{2n+1}.
  \end{array}
\right.
\end{equation}
Notice that $m, n$ are roots of equation (\ref{char.eqn}). This and (\ref{wietathm}) give that
\begin{equation}\label{tip1}
\begin{split}
(m-a)^2-bc=&(d-a)(m-a);\\
(n-a)^2-bc=&(d-a)(n-a);\\
(m-a)(n-a)=&-bc;\\
m+n=&a+d.
\end{split}
\end{equation}
Substitute the third equation of (\ref{caseIIcrieqn1}) into the forth one and apply (\ref{tip1}), then one can rewrite (\ref{caseIIcrieqn1}) as
\begin{equation}\label{caseIIcrieqn2}
\left\{
  \begin{array}{ll}
(a+d)(d-a)(m-a)v_1v_2+(m-a)F_3=0;  \\
4bc(a+d)v_1v_2+(a-d)F_3=0;                  \\
F_1=\frac{n-a}{b}F_2;                      \\
\big[(a+d)(d-a)(n-a)v_1v_2+(n-a)F_3\big]r^{2n+1}=(n-m)C_1C_2^{-2}F_2r^m.
  \end{array}
\right.
\end{equation}
Consider the first and second equation of (\ref{caseIIcrieqn2}), where
the determinant of the coefficients
\begin{equation*}
  \begin{split}
  \mathbf{D}_2:=&\left|\begin{array}{cc}
    (a+d)(d-a)(m-a) &    m-a     \\
    4bc(a+d) &    a-d   \\
\end{array}\right|\\
=&(a+d)(a-m)\big[(d-a)^2+4bc\big]=(a+d)(a-m)\delta.
  \end{split}
\end{equation*}
If $\mathbf{D}_2\neq0$, by Cramer's Rule,
$$v_1v_2\equiv0,~~~F_3\equiv0.$$
According to Lemma \ref{lemma4},
$$v_1\equiv0,$$
which contradicts that $v_1\not\equiv0$. Then we must have $\mathbf{D}_2=0$, namely
$$
(a+d)(a-m)\delta=0.
$$
Notice that $a+d>0$ and $\delta>0$, then
$$m=a.$$
Therefore, $n=a+d-m=d$, $bc=ad-mn=0$. Since $b\neq0$, then $c=0$. In summary,
\begin{equation}\label{tip2}
  m=a,~~~ n=d,~~~c=0,
\end{equation}
and $a>d\geq 0.$

With (\ref{tip2}), we can reduce equations (\ref{caseIIcrieqn2}) to
\begin{numcases}{}
F_1=\frac{d-a}{b}F_2;                   \label{caseII.eqn2} \\
F_3=0;                                  \label{caseII.eqn1} \\
(a+d)(d-a)v_1v_2r^{2d+1}=C_1C_2^{-2}F_2r^a.    \label{caseII.eqn3}
\end{numcases}
The following argument for this case is very similar to that for Case (2.2). For completeness, let's briefly describe the proof.

If $a\neq2d+1$, equation (\ref{caseII.eqn3}) is equivalent to
$$(a+d)(d-a)v_1v_2=C_1C_2^{-2}F_2\equiv0,$$
and then $v_1v_2\equiv0$, which contradicts that $v_1\not\equiv0$. Therefore
$$a=2d+1.$$

With (\ref{F1F2}) and $c=0$, we rewrite (\ref{caseII.eqn2}) as
\begin{equation}\label{v1v2rela1}
  \begin{split}
d(d-1)(d-2)v_2=&\frac{b\cos\theta}{\sin^3\theta}\Big[\frac{a(a-1)(a-2)}{d-a}\cos^2\theta-d(d-2)\sin^4\theta\\
&~~~~~~~~~~~~~~~~~~~+a(a+2d-2)\sin^2\theta\cos^2\theta\Big]v_1.
  \end{split}
\end{equation}
If $d(d-1)(d-2)=0$, then
$$\Big[\frac{a(a-1)(a-2)}{d-a}\cos^2\theta-d(d-2)\sin^4\theta+a(a+2d-2)\sin^2\theta\cos^2\theta\Big]v_1=0.$$
Since $v_1\not\equiv0$, we must have
\begin{equation*}
  \left\{
    \begin{array}{ll}
a(a-1)(a-2)=0;\\
d(d-2)=0;\\
a(a+2d-2)=0.
    \end{array}
  \right.
\end{equation*}
The above equations have solutions
\begin{equation*}
  \left\{
     \begin{array}{ll}
d=0  \\
a=0~;
     \end{array}
   \right.~~~~
  \left\{
     \begin{array}{ll}
d=0  \\
a=2~;
     \end{array}
   \right.~~~~
  \left\{
     \begin{array}{ll}
d=2 \\
a=0~.
     \end{array}
   \right.
\end{equation*}
These all contradict that $a=2d+1$. Therefore
$$d(d-1)(d-2)\neq0.$$
This and (\ref{v1v2rela1}) give that
\begin{equation}\label{v1v2rela3}
  \begin{split}
v_2=\frac{b}{d(d-1)(d-2)}\frac{\cos\theta}{\sin^3\theta}&\Big[\frac{a(a-1)(a-2)}{d-a}\cos^2\theta-d(d-2)\sin^4\theta\\
&~~~~~~+a(a+2d-2)\sin^2\theta\cos^2\theta\Big]v_1.
  \end{split}
\end{equation}
Notice that when $c=0$, equation (\ref{caseII.eqn1}) is
$$F_3=b(a\cot\theta-d\tan\theta)v_1^2+\Big(\frac{a^2-a}{\sin^2\theta}
-\frac{d^2-d}{\cos^2\theta}\Big)v_1v_2=0.$$
Since $v_1\not\equiv0$, there exists an interval $K_2$, such that
$$v_1\neq0,~~~~~\theta\in K_2.$$
Then
$$
b(a\cot\theta-d\tan\theta)v_1+\Big(\frac{a^2-a}{\sin^2\theta}
-\frac{d^2-d}{\cos^2\theta}\Big)v_2=0,~~~~~\theta\in K_2.
$$
Substituting (\ref{v1v2rela3}) into the above equation, we obtain that in $K_2$,
\begin{equation*}
  \begin{split}
&d(d-1)(d-2)(a\cos^2\theta-d\sin^2\theta)\sin^4\theta+\big[(a^2-a)\cos^2\theta-(d^2-d)\sin^2\theta\big]\\
&~~~~~\times\Big[\frac{a(a-1)(a-2)}{d-a}\cos^2\theta-d(d-2)\sin^4\theta+a(a+2d-2)\sin^2\theta\cos^2\theta\Big]=0.
  \end{split}
\end{equation*}
Notice that the left hand side of the above equation is a polynomial with respect to $\sin^2\theta$,
and the constant term of this polynomial is $\frac{a^2(a-1)^2(a-2)}{d-a}$, then we have
$$\frac{a^2(a-1)^2(a-2)}{d-a}=0,$$
and
$a=0,~1,~2.$
Recall that $a=2d+1$, $d\geq0$ and $d(d-1)(d-2)\neq0$, then
$$a=2,~~~~ d=\frac12.$$
This is also impossible by similar arguments as Case (2.2).
Therefore, Case (3) does't exist.

\textbf{\underline{Case (4) and (5): $b\neq0$, $\delta\leq0$.}} In these two cases,  write
\begin{equation}\label{case4-5}
\left\{
    \begin{array}{ll}
\varphi_1=(C_1f_1+C_2f_2)r^k, \\
\varphi_2=(B_1f_1+B_2f_2)r^k.
    \end{array}
  \right.
\end{equation}
In Case (4),
\begin{equation}\label{case4detail}
\left\{
    \begin{array}{ll}
k=l,\\
f_1=\ln r, ~~~&f_2=1, \\
B_1=\frac{l-a}{b}C_1, ~~~&B_2=\frac{C_1+(l-a)C_2}{b}.
    \end{array}
  \right.
\end{equation}
where $l$ is the unique real root of equation (\ref{char.eqn}).
Since $\varphi_1$ and $\varphi_2$ are linearly independent, then $C_1\neq0$. $\varphi_1, \varphi_2\in C^0$ at $r=0$, then $k=l>0$.

In Case (5),
\begin{equation}\label{case5detail}
\left\{
    \begin{array}{ll}
k=\lambda,\\
f_1=\cos(\mu \ln r), ~~~&f_2=\sin(\mu\ln r), \\
B_1=\frac{(\lambda-a)C_1+\mu C_2}{b}, ~~~&B_2=\frac{(\lambda-a)C_2-\mu C_1}{b}.
    \end{array}
  \right.
\end{equation}
where $\lambda\pm \mu i~(\mu\neq0)$ are the complex roots of equation (\ref{char.eqn}).
Since $\varphi_1$ and $\varphi_2$ are linearly independent, then $C_1^2+C_2^2\neq0$.
$\varphi_1, \varphi_2\in C^0$ at $r=0$, then $k=\lambda>0$.

Substitute (\ref{case4-5}) into (\ref{criti.eqn}) and denote $G=(a+d)v_1v_2$, then we obtain that
\begin{equation}\label{case4-5-a}
  \begin{split}
  &(C_1F_1-B_1F_2)f_1+(C_2F_1-B_2F_2)f_2\\
=r^{k+1}\Big\{&f_1^2\big[(bB_1^2-cC_1^2)G+C_1B_1F_3\big]+f_2^2\big[(bB_2^2-cC_2^2)G+C_2B_2F_3\big]\\
&+f_1f_2\big[2(bB_1B_2-cC_1C_2)G+(C_1B_2+C_2B_1)F_3\big]\Big\}.
  \end{split}
\end{equation}
Since $f_1, f_2, r^{k+1}f_1^2, r^{k+1}f_2^2$ and $r^{k+1}f_1f_2$ are linearly independent,
 the above equations are equivalent to
\begin{equation}\label{case4-5-b}
\left\{
    \begin{array}{ll}
C_1F_1-B_1F_2=0; \\
C_2F_1-B_2F_2=0; \\
(bB_1^2-cC_1^2)G+C_1B_1F_3=0; \\
(bB_2^2-cC_2^2)G+C_2B_2F_3=0; \\
2(bB_1B_2-cC_1C_2)G+(C_1B_2+C_2B_1)F_3=0.
    \end{array}
  \right.
\end{equation}

For Case (4), we substitute (\ref{case4detail}) into the last three equations of (\ref{case4-5-b})
and take into account that $C_1\neq0$, then we obtain that
\begin{numcases}{}
\big[(l-a)^2-bc\big]G+(l-a)F_3=0;    \label{case4.eqn1} \\
\Big\{C_1^2+2(l-a)C_1C_2+\big[(l-a)^2-bc\big]C_2^2\Big\}G+\big[C_1C_2+(l-a)C_2^2\big]F_3=0;  \label{case4.eqn2} \\
\Big\{2(l-a)C_1+2\big[(l-a)^2-bc\big]C_2\Big\}G+\big[C_1+2(l-a)C_2\big]F_3=0.  \label{case4.eqn3}
\end{numcases}
$\frac{(\ref{case4.eqn2})-(\ref{case4.eqn1})\times C^2_2}{C_1}$ gives that
\begin{equation}\label{case4.eqn4}
\big[C_1+2(l-a)C_2\big]G+C_2F_3=0.
\end{equation}
$\frac{(\ref{case4.eqn3})-(\ref{case4.eqn1})\times 2C_2}{C_1}$ gives that
\begin{equation}\label{case4.eqn5}
2(l-a)G+F_3=0.
\end{equation}
$\frac{(\ref{case4.eqn4})-(\ref{case4.eqn5})\times C_2}{C_1}$ gives that
$$G=0,$$
namely
$G=(a+d)v_1v_2=0.$
Notice that $a+d=2l>0$, then
$v_1v_2=0.$
Since $b\neq0$, we apply Lemma \ref{lemma4} again, then we have
$v_1\equiv0,$
which contradicts that $v_1\not\equiv0$.
Therefore Case (4) doesn't exist.

For Case (5), we substitute (\ref{case5detail}) into the last three equations of (\ref{case4-5-b}),
then we obtain that
\begin{numcases}{}
\Big\{\big[(\lambda-a)^2-bc\big]C_1^2+\mu^2C_2^2+2\mu(\lambda-a)C_1C_2\Big\}G
+\big[(\lambda-a)C_1^2+\mu C_1C_2\big]F_3=0;   ~~~~~~~      \label{case5.eqn1} \\
\Big\{\big[(\lambda-a)^2-bc\big]C_2^2+\mu^2C_1^2-2\mu(\lambda-a)C_1C_2\Big\}G
+\big[(\lambda-a)C_2^2-\mu C_1C_2\big]F_3=0;   ~~~~~~~      \label{case5.eqn2} \\
\Big\{2\mu(\lambda-a)\big(C_2^2-C_1^2\big)+2\big[(\lambda-a)^2-\mu^2-bc\big]C_1C_2\Big\}G ~~~~~~\nonumber\\
~~~~~~~~~~~~~~~~~~~~~~~~+\big[\mu \big(C_2^2-C_1^2\big)+2(\lambda-a)C_1C_2\big]F_3=0. ~~~~~~~~   \label{case5.eqn3}
\end{numcases}
We claim that $$G=0.$$
Firstly, if $C_1=0$, then $C_2\neq0$ due to $C_1^2+C_2^2\neq0$, and  (\ref{case5.eqn1}) becomes
$\mu^2C_2^2G=0.$ Since $\mu\neq0$, then $G=0$.\\
Secondly, if $C_2=0$, then $C_1\neq0$ and  (\ref{case5.eqn2}) becomes
$\mu^2C_1^2G=0$ and thus $G=0$.\\
Finally, if $C_1C_2\neq0$, $(\ref{case5.eqn1})\times\frac{C_2}{C_1}+(\ref{case5.eqn2})\times\frac{C_1}{C_2}-(\ref{case5.eqn3})$ gives that
$$\mu^2\frac{\big(C_1^2+C_2^2\big)^2}{C_1C_2}G=0,$$
then
$G=0.$ Thus the claim is proved.\\
Consequently,
$G=(a+d)v_1v_2=0.$
Notice that $a+d=2\lambda>0$, then
$v_1v_2=0,$
which contradicts that $v_1\not\equiv0$.
Therefore Case (5) doesn't exist.

Finally, when $$u=\left(
                              \begin{array}{c}
                                v_1(\theta)\varphi_1(r) \\
                                v_2(\theta)\varphi_2(r) \\
                              \end{array}
                            \right)
,$$
all solutions of equations (\ref{SNS}) are (i), (ii), (iii), (iv) and (v) as shown in Theorem \ref{mainthm1}, (\ref{extrasolu1}) and (\ref{extrasolu2}).

The proof is complete.
\end{proof}

\bigskip

\noindent {\bf Acknowledgments.}
J. Wu is supported by NSFC under grant 11601373.
W. Wang was supported by NSFC under grant 11671067.

\vspace{1cm}

{\small}

\end{document}